\def \C{\hbox{\sf\rlap{\kern.25em\vrule width.1em height1.4ex depth-.06ex}C}}
\def \R{{\sf I\kern-1.5ptR}}
\def \qed{\hspace*{2mm} \hfill $\Box $\bigskip}
\newtheorem{theorem}{Theorem}[section]
\newtheorem{proposition}[theorem]{Proposition}
\newtheorem{lemma}[theorem]{Lemma}
\newtheorem{corollary}[theorem]{Corollary}
\theoremstyle{remark}
\newtheorem{example}[theorem]{Example}
\newtheorem{remark}[theorem]{Remark}
\renewenvironment{proof}[1][\proofname]{\par
\pushQED{\qed}%
\normalfont \topsep6\p@\@plus6\p@\relax
\trivlist
\item\relax
{\itshape
#1\@addpunct{.}}\hspace\labelsep\ignorespaces
}{%
\popQED\endtrivlist\@endpefalse
}
\numberwithin{equation}{section}
\begin{document}
\thispagestyle{plain}
\begin{center}
  \Large
    Bergman orthogonal polynomials and the Grunsky matrix
  \\[20pt]
  \normalsize by \\[15pt]
  \large Bernhard Beckermann\footnote{Laboratoire Painlev\'e UMR 8524 (AN-EDP), UFR Math\'ematiques --
M3, Univ.\ Lille, F-59655 Villeneuve d'Ascq CEDEX, France. E-mail:
{\tt bbecker@math.univ-lille1.fr}. Supported  in  part  by  the  Labex  CEMPI
   (ANR-11-LABX-0007-01).} and Nikos Stylianopoulos\footnote{Department of Mathematics and Statistics, University of Cyprus, P.O. Box 20537, 1678 Nicosia,
Cyprus.
E-mail: {\tt nikos@ucy.ac.cy}. Supported  in  part  by  the  University of Cyprus grant 3/311-21027.}
\end{center}
\begin{abstract}
    By exploiting a link between Bergman orthogonal polynomials and the Grunsky matrix, probably first observed by K\"uhnau in 1985, we improve on some recent results on strong asymptotics of Bergman polynomials outside the domain $G$ of orthogonality, and on the entries of the Bergman shift operator.
    In our proofs we suggest a new matrix approach involving the Grunsky matrix, and use well-established results in the literature relating properties of the Grunsky matrix to the regularity of the boundary of $G$ and the associated conformal maps. For quasiconformal boundaries this approach allows for new insights for Bergman polynomials.
\end{abstract}

\noindent
{\bf Key words: } Bergman orthogonal polynomials, Faber polynomials, Conformal mapping, Grunsky matrix, Bergman shift, Quasiconformal mapping.
\hfill \\
{\bf Subject Classifications: } AMS(MOS): 30C10, 30C62, 41A10, 65E05, 30E10.

\def\fa{f}
\def\fe{r}
\def\fH{\widetilde H}

\section{Introduction and main results}

Let $G$  be a bounded simply connected domain in the complex plane $\mathbb C$,
with boundary $\Gamma$. We define, for $n \geq 0$, the so-called Bergman orthogonal polynomials
$$
     p_n(z)=\lambda_n z^n + \mbox{~terms of smaller degree} , \quad \lambda_n > 0,
$$
satisfying
$$
    \langle p_n,p_m \rangle_{L^2(G)} := \int_G p_n(z) \overline{p_m(z)} \, dA(z) = \delta_{m,n} ,
$$
where $dA(z)=dxdy$ denotes planar Lebesgue measure.
There is a well developed theory regarding Bergman polynomials. For their basic
properties, and the asymptotic behavior including that of their zeros see  \cite{Carleman1923,Suetin1974,Gaier1987,Saff1990,Gust2008,Gust2009,DrMD,Nikos2013,Nikos2014,Nikos2015} and the references therein.
In describing these we need two conformal maps.

Denote by $\mathbb D$ the open unit disk, by $\mathbb D^*$ the exterior of the closed unit disk, by $\Omega$ the exterior of the closure $\overline{G}$ of $G$, and consider the Riemann map from $\Omega$  onto $\mathbb D^*$
$$
    w=\phi(z)=\phi_{1}z + \phi_0 + \phi_{-1} z^{-1}+...,
$$
with inverse map $$
   z=\psi(w)=\phi^{-1}(w)=\psi_{1}w + \psi_0 + \psi_{-1} w^{-1}+...,
$$
where $\gamma:=\phi_{1}=1/\psi_{1}=1/\psi'(\infty)>0$ is the inverse of the logarithmic capacity of $G$.

We will make also use of the $n$-th Faber polynomial $F_n$. This is the polynomial part of the Laurent expansion at $\infty$ of $\phi(z)^n$, and hence $F_n$ is of degree $n$, with leading coefficient $\gamma^{n}$. The corresponding Grunsky coefficients $b_{\ell,n}=b_{n,\ell}$ are defined by the generating series
\begin{equation} \label{Grunsky_coefficients}
   \log \Bigl( \frac{\psi(w)-\psi(v)}{\psi'(\infty)(w-v)}\Bigr)
   = - \sum_{n=1}^\infty \sum_{\ell=1}^\infty b_{n,\ell} w^{-\ell} v^{-n}
   = - \sum_{n=1}^\infty \frac{1}{n} \Bigl( F_n(\psi(w))-w^n \Bigr) v^{-n},
\end{equation}
which is analytic and absolutely convergent for $|w|>1,|v|>1$, see, e.g., \cite[Chapter 3, Eqns (8) and (10)]{Pommerenke1975}.

In what follows it will be convenient to work with the normalized Grunsky coefficients
\begin{equation} \label{normalized_Grunsky_coefficients}
      C_{n,k} = C_{k,n} = \sqrt{n+1}\, \sqrt{k+1}\, {b_{n+1,k+1}},
      \quad \mbox{for $n,k=0,1,2,...$,}
\end{equation}
for which the Grunsky inequality \cite[Theorem~3.1]{Pommerenke1975} reads as follows: For any integer $m \geq 0$ and any complex numbers $y_0,y_1,...,y_m$ there holds
\begin{equation} \label{Grunsky_inequality}
       \sum_{n=0}^\infty \Bigl| \sum_{k=0}^m C_{n,k} y_k \Bigr|^2
       \leq  \sum_{k=0}^m | y_k |^2 .
\end{equation}
We note that \eqref{Grunsky_inequality} holds under the sole assumption that $\overline G$ is a continuum, i.e., closed and connected, and $\Omega$ is the component of $\overline{\mathbb C} \setminus \overline{G}$ that contains $\infty$.

Under various assumptions on $\Gamma$ many results are scattered over the literature establishing that the Bergman polynomial $p_n$ behaves like a suitably normalized derivative of the Faber polynomial $F_{n+1}$, namely like
\begin{equation} \label{faber}
    \fa_n(z):=\frac{F'_{n+1}(z)}{\sqrt{\pi}\sqrt{n+1}} = \sqrt{\frac{n+1}{\pi}} \gamma^{n+1} z^n + \mbox{~terms of smaller degree}.
\end{equation}

Taking derivatives with respect to $w$ in \eqref{Grunsky_coefficients}, using
\eqref{normalized_Grunsky_coefficients} and comparing like powers of $v$ leads, for $|w|>1$, to the formula
\begin{equation} \label{faber_residual}
    \fe_n(w):=\psi'(w)\fa_n(\psi(w)) - \sqrt{\frac{n+1}{\pi}} w^n
    = - \sum_{\ell=0}^\infty \sqrt{\frac{\ell+1}{\pi}}w^{-(\ell+2)} C_{\ell,n}.
\end{equation}
The Grunsky inequality \eqref{Grunsky_inequality} allows us to conclude\footnote{We do not need further assumptions on the boundary like $\Gamma$ being rectifiable, compare with \cite[Lemma~2.1]{Nikos2013}.}
that
$r_n\in \mathbb L^2(\mathbb D^*)$,
and more precisely
\begin{equation} \label{norm_residual}
    \varepsilon_n := \| \fe_n \|_{L^2(\mathbb D^*)}^2 = \sum_{j=0}^\infty |C_{j,n}|^2 \leq 1.
\end{equation}
The upper bound in (\ref{norm_residual}) follows by choosing $y_0=y_1=...=y_{n-1}=0$ and
$y_n=1$ in (\ref{Grunsky_inequality}). For the equality connecting the $L^2$-norm with the summation over
Grunsky coefficients see the last part of the proof of Lemma~\ref{M=I-C*C} below.
In the same lemma we show that the relation
\begin{equation}\label{eq:eps-fn}
\varepsilon_n = 1 -\| \fa_n \|^2_{L^2(G)}
\end{equation}
holds under the sole assumption that the boundary $\Gamma$ of $G$ has zero area.
For comparison we note that, although there is a different normalization for $\fa_n$, the quantity $\varepsilon_n$ in \eqref{norm_residual} coincides with that of \cite[Eqn.\ (2.21)]{Nikos2013}.


Most of the results of this paper require the additional assumption that the boundary $\Gamma$ is quasiconformal. We refer the reader to \cite[\S 9.4]{Pommerenke1975} for a definition and elementary properties, and recall that a piecewise smooth (in particular piecewise analytic) Jordan curve $\Gamma$ is quasiconformal if and only if it
has no cusps. Also, a quasiconformal curve is a Jordan curve, with two-dimensional Lebesgue measure zero, see, e.g., \cite[Section B.2.1]{AndrBlatt}. In our considerations in \S 2 the following fact will be important \cite[Theorem~9.12 and Theorem~9.13]{Pommerenke1975}: {\it we can improve the Grunsky inequality \eqref{Grunsky_inequality} exactly for the class of quasiconformal curves, by inserting a factor strictly less than one on the right-hand side of \eqref{Grunsky_inequality}.}

For quasiconformal $\Gamma$, the following double inequality have been established by the second author in \cite[Lemma 2.4 and Theorem 2.1]{Nikos2013}:
\begin{equation} \label{asymp_leading}
        \varepsilon_n \leq 1 - \frac{n+1}{\pi} \frac{\gamma^{2n+2}}{\lambda_n^2}
        \le c(\Gamma)\varepsilon_n,
\end{equation}
where $c(\Gamma)$ is a positive constant that depends on $\Gamma$ only.
In other words, the leading coefficient $\lambda_n$ of $p_n$ behaves like that of $f_n$ if and only if $\varepsilon_n\to 0$, $n\to\infty$.

By using (\ref{eq:eps-fn}), \eqref{asymp_leading}  and Pythagoras' theorem, it is not difficult to obtain the estimate
\begin{equation} \label{asymp_norm}
        \| \fa_n - p_n\|_{L^2(G)}^2
        = \mathcal O(\varepsilon_n),
\end{equation}
see, e.g., \cite[p.~71]{Nikos2013}.
This means that if $\varepsilon_n \to 0$, then we also get $L^2$ asymptotics on the support of orthogonality for $p_n$. Furthermore, by employing the arguments in \cite[p.\ 2449]{Nikos2015b}, the relation \eqref{asymp_norm} leads to the relative asymptotics
\begin{equation} \label{asymp_outside}
        \frac{\fa_n(z)}{p_n(z)} = 1 + \mathcal O(\sqrt{\varepsilon_n}) ,
\end{equation}
for $z$ outside the closed convex hull $\textup{Co}(G)$ of $G$.
Note that, by Fejer's theorem \cite[Theorem~2.1]{Gaier1987}, $p_n$ is zero free outside $\textup{Co}(G)$.

In Remark~\ref{rem_equivalence} below we provide alternate (and shorter) proofs of \eqref{asymp_leading} and \eqref{asymp_norm} based on a novel matrix approach. Moreover, a sharper result related to \eqref{asymp_outside} will be established in Theorem~\ref{thm1}.

Here we review a number of important cases where the behavior of $\varepsilon_n$ is known. Further details are given in \S3 below.

When $\Gamma$ is analytic, then the function $\psi$ has an analytic and
univalent continuation to $|w|>\rho$, for some $\rho\in [0,1)$. We write $\Gamma\in U(\rho)$ for the smallest such $\rho$. The asymptotic behavior of Bergman polynomials for $\Gamma\in U(\rho)$ has been first derived by Carleman in \cite{Carleman1923} where, in particular, it was shown that
\begin{equation} \label{in_carleman}
      1 - \frac{n+1}{\pi} \frac{\gamma^{2n+2}}{\lambda_n^2}
        = \mathcal O(\rho^{2n}).
\end{equation}
For $\Gamma$ in the same class K\"uhnau noted in \cite[Eqn.\ (9)]{Kuehnau1986}  the inequality $|C_{\ell,n}|\leq \rho^{\ell+n+2}$ and concluded using \eqref{norm_residual} that $\varepsilon_n=\mathcal O(\rho^{2n})$. Thus \eqref{in_carleman} can be also deduced from \eqref{asymp_leading} by means of the Grunsky coefficients. The particular case of an ellipse discussed in \S\ref{example_ellipse} below shows that \eqref{in_carleman} is not sharp, because in this case 
$\varepsilon_n=\rho^{4n+4}$. For a recent discussion of asymptotics inside $G$ when $\Gamma\in U(\rho)$ we refer to Mi\~{n}a-D{\'\i}az \cite{MD2008}.

If there is a parametrization of $\Gamma$ having a derivative of order $p\geq 1$ which is H\"older continuous with index $\alpha\in (0,1)$, then we write $\Gamma\in\mathcal C(p,\alpha)$.
Under the assumption $\Gamma\in \mathcal C(p+1,\alpha)$, $p \geq 0$, Suetin has shown in \cite[Lemma~1.5]{Suetin1974} that $\varepsilon_n=\mathcal O(1/n^{\beta})$, with $\beta=2p+2\alpha$. When $\beta>1$, the upper bound of \eqref{asymp_leading} can be found in \cite[Theorem 1.1]{Suetin1974}.

Finally, the case of piecewise analytic $\Gamma$ without cusps has been studied recently by the second author,
who obtained in \cite[Theorem 2.4]{Nikos2013} the estimate
$\varepsilon_n=\mathcal O(1/n)$. An example of two overlapping disks considered by Mi\~{n}a-D{\'\i}az in \cite{MinaDiaz} shows that this estimate cannot be improved, since $\liminf_n n\varepsilon_n >0$, for this particular $\Gamma$.

To complete the picture we note that, as Proposition~\ref{co2} below shows, the estimate  $\varepsilon_n=\mathcal{O}({1}/{n})$ \emph{cannot hold for all quasiconformal curves} $\Gamma$.

For easy reference we summarize:
\begin{equation} \label{summarize}
   \varepsilon_n = \left\{\begin{array}{ll}
   \mathcal O(\rho^{2n}), & \mbox{if $\Gamma\in U(\rho)$,}
   \\
   \mathcal O({1}/{n^{2(p+\alpha)}}), & \mbox{if $\Gamma\in \mathcal C(p+1,\alpha)$,}
   \\
   \mathcal O({1}/{n}), & \mbox{if $\Gamma$ is piecewise analytic without cusps.}
\end{array}\right.
\end{equation}
Motivated by the three preceding cases we will assume hereafter that
\begin{equation} \label{assumption_epsilon}
     \varepsilon_n=\mathcal O(1/n^{\beta}), \quad \mbox{for some $\beta>0$.}
\end{equation}

From the reproducing kernel property in $L^2(\mathbb D^*)$ we have that, for $|w|>1$,
$$
    \fe_n(w)  =   \int_{\mathbb D^*} \fe_n(u) K(u,w)dA(u),
    \quad \mbox{where}
    \quad K(u,w)=\frac{1}{\pi (w\overline{u}-1)^2}.
 $$
Then, using the Cauchy-Schwarz inequality, the definition of $\varepsilon_n$ in \eqref{norm_residual} and the fact that
$K(w,w)=\| K(\cdot,w) \|_{L^2(\mathbb D^*)}^2$, we conclude
\begin{equation} \label{Faber_outside_bis}
     | \fe_n(w) | \leq \| r_n \|_{L^2(\mathbb D^*)} \,
        \| K(\cdot,w) \|_{L^2(\mathbb D^*)}
     = \sqrt{\frac{\varepsilon_n}{\pi}}\frac{1}{|w|^2-1}.
\end{equation}
Therefore, it follows from \eqref{faber_residual}, for $w=\phi(z)$, that
\begin{equation} \label{Faber_outside}
    \sqrt{\frac{\pi}{n+1}} \frac{\fa_n(z)}{\phi'(z)\phi(z)^n} - 1
    = \sqrt{\frac{\pi}{n+1}} \frac{\fe_n(w)}{w^n}
\end{equation}
tends to zero uniformly on compact subsets of $\Omega$ with a geometric rate.

Our first result describes strong asymptotics for Bergman polynomials outside the support of orthogonality.
\begin{theorem}\label{thm1}
   Let $\Gamma$ be quasiconformal, and assume that $\varepsilon_n=\mathcal O(1/n^\beta)$, for some $\beta>0$. Then
   \begin{equation} \label{eqn.thm1}
    \sqrt{\frac{\pi}{n+1}} \frac{p_n(z)}{\phi'(z)\phi(z)^n} - 1
         = \mathcal O(\frac{\sqrt{\varepsilon_n}}{n^{\beta/2}}), \quad n \to \infty,
   \end{equation}
   uniformly on compact subsets of $\Omega$.
\end{theorem}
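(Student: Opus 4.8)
The strategy is to transfer the known $L^2(G)$ estimate $\|\fa_n - p_n\|_{L^2(G)}^2 = \mathcal{O}(\varepsilon_n)$ from \eqref{asymp_norm} into a pointwise estimate outside $\overline G$, exploiting that the difference $\fa_n - p_n$ is a polynomial (hence analytic everywhere) that is small in $L^2(G)$, and then combining with the already-established pointwise estimate \eqref{Faber_outside} for the Faber side. The point that will give the extra saving $n^{-\beta/2}$ beyond the crude $\mathcal{O}(\sqrt{\varepsilon_n})$ of \eqref{asymp_outside} is that one should not estimate $\fa_n - p_n$ directly by its $L^2(G)$-norm times a reproducing-kernel factor, but rather exploit the orthogonality of $p_n$ together with the structure of $\fa_n$ in the Grunsky/Bergman basis — equivalently, the matrix identity $M = I - C^*C$ from Lemma~\ref{M=I-C*C} — to see that the coordinates of $\fa_n - p_n$ in the Bergman orthonormal basis $\{p_k\}$ are themselves controlled by $\sqrt{\varepsilon_n}$ times something of size $n^{-\beta/2}$.

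Concretely, I would proceed as follows. First, expand $\fa_n = \sum_{k\ge 0} a_{n,k}\, p_k$ in the orthonormal basis; then $\fa_n - p_n = \sum_k (a_{n,k} - \delta_{n,k}) p_k$ and $\|\fa_n - p_n\|_{L^2(G)}^2 = \sum_k |a_{n,k}-\delta_{n,k}|^2$. From \eqref{asymp_norm} this sum is $\mathcal O(\varepsilon_n)$. Next, for a fixed compact $K \subset \Omega$, use \eqref{Faber_outside} (applied with $n$ replaced by $k$) to get $|p_k(z)| \le |\fa_k(z)| + |\fa_k(z)-p_k(z)|$ and a uniform bound $|\fa_k(z)| \le c\,\sqrt{k+1}\,|\phi(z)|^{k}|\phi'(z)|(1+o(1))$ on $K$; since $|\phi(z)|\ge R>1$ there, $|p_k(z)|$ decays geometrically in $k$ away from $k=n$, but near $k=n$ we get $|p_k(z)|\lesssim \sqrt{n}\,R^{k}$. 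Therefore
\[
  |\fa_n(z)-p_n(z)| \le \sum_k |a_{n,k}-\delta_{n,k}|\,|p_k(z)|
  \le \Bigl(\sum_k |a_{n,k}-\delta_{n,k}|^2\Bigr)^{1/2}\Bigl(\sum_k |p_k(z)|^2\Bigr)^{1/2},
\]
and the second factor on $K$ is $\mathcal O(R^{n}\sqrt{n})$ — but this only gives $\mathcal O(\sqrt{\varepsilon_n}\,R^n\sqrt n)$, which after dividing by $|\phi(z)^n| \sim R^n$ is merely $\mathcal O(\sqrt{n\,\varepsilon_n})$, not yet the claimed rate. To get the sharp bound one must instead use that $a_{n,k} - \delta_{n,k}$ is \emph{essentially lower-triangular up to a Grunsky correction}: writing $P = (a_{n,k})$, the matrix identity behind Lemma~\ref{M=I-C*C} shows $P$ differs from a triangular factor of $I - C^*C$ by controllable terms, so that the coefficients $a_{n,k}-\delta_{n,k}$ with $k$ close to $n$ — the only ones that survive the geometric weights $R^{k-n}$ after normalization — carry an \emph{extra} factor $\mathcal O(n^{-\beta/2})$ coming from $\varepsilon_n^{1/2}=\mathcal O(n^{-\beta/2})$ entering each such coefficient, while the $\sqrt{\varepsilon_n}$ is picked up from the $\ell^2$-mass of the whole tail. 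Combining the contribution from the Faber term via \eqref{Faber_outside} (which is geometrically small, hence negligible against the polynomial rate $n^{-\beta}$) with this refined bound on $\fa_n - p_n$, and dividing through by $\phi'(z)\phi(z)^n$, yields \eqref{eqn.thm1} uniformly on $K$.

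The main obstacle is precisely the last refinement: a naive Cauchy--Schwarz across all basis indices loses a factor of order $\sqrt{n}$ and only reproduces \eqref{asymp_outside}. The work is to organize the sum $\sum_k (a_{n,k}-\delta_{n,k}) p_k(z)/(\phi'(z)\phi(z)^n)$ so that (i) the far-off-diagonal part is killed by geometric decay of $|\phi(z)|^{-n}|p_k(z)| \sim R^{k-n}$, leaving $\mathcal O(n)$ relevant terms near the diagonal, and (ii) on that near-diagonal block one uses the Grunsky matrix structure — namely that $\|C\|<1$ for quasiconformal $\Gamma$ and that the relevant entries of $I-(I-C^*C)^{1/2}$ (or of the triangular Cholesky-type factor appearing in Lemma~\ref{M=I-C*C}) are $\mathcal O(\varepsilon_n)$ entrywise while their column has $\ell^2$-norm $\mathcal O(\sqrt{\varepsilon_n})$ — to extract one power of $\sqrt{\varepsilon_n}=\mathcal O(n^{-\beta/2})$ from the size of the individual coefficients and another from summing. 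I expect the quasiconformality hypothesis to enter exactly here, guaranteeing $\|C\| \le \kappa < 1$ so that the Neumann-type series for the triangular factor of $I - C^*C$ converges with uniformly bounded partial-sum norms, which is what makes the near-diagonal block estimate go through.
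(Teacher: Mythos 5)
Your plan identifies the same mechanism the paper's proof uses: split the change-of-basis sum at $j\approx n/2$, let the geometric factor $|\phi(z)|^{j-n}$ kill the far-off-diagonal block, and on the near-diagonal block replace the global Cauchy--Schwarz bound --- which, as you correctly note, only reproduces $\mathcal O(\sqrt{n\,\varepsilon_n})$ --- by an entrywise bound that carries an extra $n^{-\beta/2}$. The precise entrywise input is \eqref{bound_R_entry} of Theorem~\ref{thm_bounds}, $|R^{-1}_{j,n}-\delta_{j,n}|\le \sqrt{\varepsilon_j\varepsilon_n}/(1-\|C\|^2)$: for $n/2\le j\le n$ the factor $\sqrt{\varepsilon_j}=\mathcal O(n^{-\beta/2})$ supplies exactly the gain you describe, and quasiconformality enters only through $\|C\|<1$, as you anticipate. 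So the skeleton is right. Note, though, that you should state this bound with its $j$-dependence ($\sqrt{\varepsilon_j\varepsilon_n}$, not ``$\mathcal O(\varepsilon_n)$ entrywise''): it is the decay of $\varepsilon_j$ over the whole range $j\ge n/2$, guaranteed by the hypothesis $\varepsilon_n=\mathcal O(n^{-\beta})$, that produces the factor $n^{-\beta/2}$; no second application of Cauchy--Schwarz on the near block is needed, since the geometric weights already make that sum $\mathcal O(1)$.

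There is, however, a genuine gap in the direction you chose for the expansion. You expand $\fa_n-p_n$ in the Bergman basis $\{p_k\}$ and then need pointwise bounds $|p_k(z)|\lesssim \sqrt{k}\,|\phi(z)|^{k}$, uniformly in $k$, on a compact $K\subset\Omega$. But $K$ need not avoid $\textup{Co}(G)$, so \eqref{asymp_outside} is unavailable there, and your proposed bound $|p_k(z)|\le|\fa_k(z)|+|\fa_k(z)-p_k(z)|$ requires a pointwise estimate of $\fa_k-p_k$ on $K$ --- which is essentially the statement being proved, for index $k$. The paper avoids this circularity by expanding in the opposite direction, $p_n-\fa_n=\sum_{j\le n}\fa_j\,(R^{-1}_{j,n}-\delta_{j,n})$, because the exterior behavior of the $\fa_j$ is completely explicit from \eqref{faber_residual}: $\psi'(w)\fa_j(\psi(w))=\sqrt{(j+1)/\pi}\,w^j+\fe_j(w)$, with the residual row vector $(\fe_0(w),\fe_1(w),\dots)$ controlled in one stroke through the Grunsky matrix and \eqref{bound_R_row}, contributing only a geometrically small error. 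Your route can be repaired --- for instance, first derive the crude a priori bound $|p_k(z)|\le c(K)\sqrt{k}\,|\phi(z)|^{k}$ from $p_k=\sum_{j\le k}\fa_j R^{-1}_{j,k}$ together with $\|R^{-1}\|\le(1-\|C\|^2)^{-1/2}$ --- but at that point you are already working with $R^{-1}$ and the Faber basis, so the paper's arrangement is the more economical one.
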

The result of Theorem~\ref{thm1} should be compared with \cite[Theorem~1.2]{Nikos2013}, for piecewise analytic $\Gamma$ without cusps ($\beta=1$) and with \cite[Theorem~1.4]{Suetin1974}, for sufficiently differentiable $\Gamma$ where, in both cases, the rate obtained is $\mathcal O(\sqrt{\varepsilon_n})$, the same as in \eqref{asymp_outside}.
When $\Gamma\in U(\rho)$, then we can still apply Theorem~\ref{thm1}, with $\beta$ any positive constant, and compare it
with the corresponding result of Carleman \cite[Theorem~2.2]{Gaier1987}, which gives the rate
$\sqrt{n\varepsilon_n}=\mathcal O(\sqrt{n}\rho^n)$, cf. \cite[p.~1983]{DrMD} and
\cite[Theorem 1]{MD2008}.
That is, in all these three cases, \eqref{eqn.thm1} yields an improvement of order $\mathcal O(1/n^{\beta/2})$ in the estimated rate of convergence. We should point out, however, that in contrast to the cited works, where
(\ref{eqn.thm1}) is established for $z$ on $\overline{\Omega}$, or for any $z\in\Omega$, our estimate \eqref{eqn.thm1} is only valid on compact subsets of $\Omega$.

In Section 5 below, we present numerical results which suggest that if $\Gamma$
is piecewise analytic without cusps, then the predicted order
$\mathcal{O}(1/n)$ in Theorem \ref{thm1} is sharp.

In contrast to \cite{Suetin1974} and related works for (complex) Jacobi matrices (see for instance \cite{Simon} or \cite[\S 3.5]{Bernd2001} and the references therein), our method of proof is not based on operator theory techniques, like determinants of the identity plus a trace class perturbation, and thus here, besides \eqref{assumption_epsilon}, we do not need any further assumption on the decay of the sequence $\{ \varepsilon_n \}$.

Our second result concerns the multiplication operator $H$, sometimes called Bergman shift,
which is defined by
\begin{equation} \label{multiplication}
     z p_n(z) = \sum_{j=0}^{n+1} p_j(z) H_{j,n} , \quad
     \mbox{with}~~~ H_{j,n} = \langle zp_n, p_j \rangle_{L^2(G)} .
\end{equation}
Notice that, by orthogonality, $H_{j,n}=0$ for $j>n+1$, and thus the infinite matrix $H$ has upper Hessenberg form. Using the Cauchy-Schwarz inequality it is easy to deduces that $H$ represents a bounded linear
operator in $\ell^2$, with $\| H \| \leq \sup\{ |z|: z \in G\}$, see also  \cite[\S 29]{Conway}.

\begin{theorem}\label{thm2}
   If $\Gamma$ is quasiconformal then there exists a constant $c(\Gamma)$ such that, for all $k\geq -1$ and $n \geq 0$,
   $$
          \Bigl| H_{n-k,n} - \sqrt{\frac{n+1}{n-k+1}} \psi_{-k} \Bigr|
          \leq c(\Gamma) \, (k+2) \,
         \max\Bigl(\varepsilon_{n-k-1},...,\varepsilon_{n},\varepsilon_{n+1}\Bigr).
   $$
\end{theorem}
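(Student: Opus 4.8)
The plan is to translate the definition $H_{j,n}=\langle zp_n,p_j\rangle_{L^2(G)}$ into a statement about the Faber analogues $f_n$ and then about the Grunsky matrix, using the approximation $\|f_n-p_n\|_{L^2(G)}^2=\mathcal O(\varepsilon_n)$ from \eqref{asymp_norm} to control the error at every stage. First I would write $H_{j,n}=\langle zf_n,f_j\rangle_{L^2(G)}+\langle z(p_n-f_n),f_j\rangle_{L^2(G)}+\langle zp_n,p_j-f_j\rangle_{L^2(G)}$; since $G$ is bounded and $\|f_j\|,\|p_j\|$ are bounded, Cauchy--Schwarz bounds the last two terms by $\mathcal O(\sqrt{\varepsilon_n}+\sqrt{\varepsilon_j})$, which is already too weak by itself — this is why the factor $(k+2)$ and the \emph{maximum} over $\varepsilon_{n-k-1},\dots,\varepsilon_{n+1}$ appear, and why one must instead work with the matrix $M=I-C^*C$ of Lemma~\ref{M=I-C*C} rather than with scalar estimates. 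The key is to compute $\langle zf_n,f_j\rangle_{L^2(G)}$ exactly: using the change of variables $z=\psi(w)$, the identity $\int_G g(z)\overline{h(z)}\,dA(z)=\int_{\mathbb D}\psi'(w)g(\psi(w))\overline{\psi'(w)h(\psi(w))}\,dA(w)$ (valid since $\Gamma$ has area zero), and the formula \eqref{faber_residual} expressing $\psi'(w)f_n(\psi(w))=\sqrt{(n+1)/\pi}\,w^n-\sum_\ell\sqrt{(\ell+1)/\pi}\,w^{-(\ell+2)}C_{\ell,n}$, one expands $z=\psi(w)=\sum_m\psi_{1-m}w^{m}$ and reads off $\langle z f_n,f_j\rangle$ as an explicit bilinear expression in the $\psi$-coefficients and the entries of $C$. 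The leading (diagonal-in-$w$-powers) term should reproduce $\sqrt{(n+1)/(j+1)}\,\psi_{j-n}$ when $j\le n+1$, i.e.\ $\psi_{-k}$ with $k=n-j$, and everything else is a finite sum of products $\psi_\bullet\cdot C_{\bullet,n}$ or $C_{\bullet,j}C_{\bullet,n}$ over $\mathcal O(k)$-many indices near $n$.

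Next I would bound that remainder. Each term is either linear in a single Grunsky entry $C_{\ell,m}$ with $|m-n|\le k+1$, or quadratic $C_{\ell,m}C_{\ell',m'}$ with $|m-n|,|m'-n|\le k+1$. Using $\sum_\ell|C_{\ell,m}|^2=\varepsilon_m\le 1$ (from \eqref{norm_residual}) together with Cauchy--Schwarz in $\ell$, and the fact that $\{\psi_{-k}\}$ is (at least) $\ell^2$ — indeed, quasiconformality gives more, e.g.\ boundedness and summability of the relevant tails via $\S 9.4$ of Pommerenke — each such contribution is bounded by $C\sqrt{\varepsilon_{m}}\le C\sqrt{\max(\varepsilon_{n-k-1},\dots,\varepsilon_{n+1})}$ or by $C\max(\varepsilon_{n-k-1},\dots,\varepsilon_{n+1})$ respectively; summing $\mathcal O(k+2)$ of them yields the stated bound with a $\sqrt{\max\varepsilon}$ rather than $\max\varepsilon$. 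To upgrade $\sqrt{\varepsilon}$ to $\varepsilon$ — which is what the theorem claims — I expect to need the improved Grunsky inequality available exactly for quasiconformal $\Gamma$ (the factor $\kappa(\Gamma)<1$ on the right of \eqref{Grunsky_inequality}, cited from Pommerenke Thms.~9.12--9.13): it lets one bound the linear-in-$C$ terms not just by $\sqrt{\varepsilon_m}$ but, after pairing them correctly against the $p_n-f_n$ discrepancy and re-expanding that discrepancy itself in the Grunsky basis via \eqref{asymp_norm} and the structure $M=I-C^*C$, by a genuine $\varepsilon$-order quantity; concretely, the error $p_n-f_n$ expressed in the $\{f_j\}$ basis has coefficients that are themselves $\mathcal O(\sqrt{\varepsilon})$ but paired against another $\mathcal O(\sqrt{\varepsilon})$ factor, and the cross terms $\langle zf_n,f_j\rangle$ that were linear in $C$ get multiplied by such coefficients.

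The main obstacle, as the above makes clear, is the bookkeeping that converts the naive $\mathcal O(\sqrt{\varepsilon})$ scalar estimates into the sharp $\mathcal O((k+2)\max\varepsilon)$ bound: one must identify exactly which products of a $p_j-f_j$ coefficient with a Faber matrix entry occur, verify that every surviving term is genuinely quadratic in Grunsky data (or in the $\psi_{-k}$ coefficients which decay fast enough for quasiconformal $\Gamma$), and control the number of nonzero terms by $\mathcal O(k)$ using the band structure $H_{j,n}=0$ for $j>n+1$ together with the fact that $f_n$ and $f_j$ share leading behavior only when $|n-j|$ is small. A secondary technical point is justifying the change of variables and the termwise manipulation of the doubly infinite series in \eqref{faber_residual} inside the integral, but this is routine given that $\Gamma$ has zero area and $r_n\in L^2(\mathbb D^*)$. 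I would organize the proof as: (i) area-zero change of variables and exact formula for $\langle zf_n,f_j\rangle$; (ii) the three-term splitting of $H_{j,n}$ and expansion of $p-f$ in the Faber basis; (iii) collect terms and apply Cauchy--Schwarz plus the improved Grunsky inequality to reach the $\varepsilon$-order bound; (iv) absorb the $\mathcal O(k)$ term count and the $\Gamma$-dependent constants into $c(\Gamma)(k+2)$.
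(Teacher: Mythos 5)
Your proposal assembles several of the right ingredients (the target quantity $G_{j,n}=\sqrt{(n+1)/(j+1)}\,\psi_{j-n}$ coming from the generating function $1/(\psi(u)-z)=\sum_n\sqrt{\pi/(n+1)}\,f_n(z)u^{-n-1}$, the need for a bound that is quadratic in $\sqrt{\varepsilon}$, and the use of the Hessenberg band structure to limit the term count to $\mathcal O(k+2)$), but the central mechanism that actually produces the $\max\varepsilon$ rather than $\sqrt{\max\varepsilon}$ is missing. The improved Grunsky inequality $\|C\|<1$ does not by itself upgrade $\sqrt{\varepsilon}$ to $\varepsilon$: it only supplies the constant $1/(1-\|C\|^2)$. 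What does the work in the paper is the \emph{entrywise} estimate \eqref{bound_R_entry} of Theorem~\ref{thm_bounds}, $|R_{j,n}-\delta_{j,n}|\le\sqrt{\varepsilon_j\varepsilon_n}/(1-\|C\|^2)$ (each single entry is already a product of two $\sqrt{\varepsilon}$'s with both indices in the relevant window), combined with the purely algebraic identity $R\,G=H\,R$, which one rewrites as $H-G=H(I-R)-(I-R)G$. Taking the $(n-k,n)$ entry, the band structure of $H$ and $G$ and the upper triangularity of $I-R$ restrict both sums to at most $k+2$ terms of the form $H_{n-k,\ell}(I-R)_{\ell,n}$ with $n-k-1\le\ell\le n$ and $(I-R)_{n-k,\ell}G_{\ell,n}$ with $n-k\le\ell\le n+1$, each bounded by $\|H\|$ or $\|G\|$ times $\sqrt{\varepsilon_\ell\varepsilon_\bullet}\le\max(\varepsilon_{n-k-1},\dots,\varepsilon_{n+1})$. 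Your plan never isolates this identity or the entrywise bound; the phrase ``after pairing them correctly against the $p_n-f_n$ discrepancy and re-expanding that discrepancy in the Grunsky basis'' is precisely the step that needs to be made concrete, and as written it is not a proof.

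A secondary problem: your claim that the exact evaluation of $\langle zf_n,f_j\rangle_{L^2(G)}$ leaves a remainder supported on $\mathcal O(k)$ indices near $n$ is not correct. Since $zf_n=\sum_{i=0}^{n+1}f_iG_{i,n}$ and $\langle f_i,f_j\rangle=\delta_{i,j}-(Ce_j)^*(Ce_i)$ by Lemma~\ref{M=I-C*C}, one gets $\langle zf_n,f_j\rangle=G_{j,n}-\sum_{i=0}^{n+1}G_{i,n}(Ce_j)^*(Ce_i)$, a sum over \emph{all} $i\le n+1$; controlling it yields quantities involving $\varepsilon_i$ for small $i$ as well, not just the window $\{n-k-1,\dots,n+1\}$, so even completing your route would give a weaker (or at least different) statement. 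The matrix identity $H-G=H(I-R)-(I-R)G$ is exactly what localizes everything to the band.
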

Theorem~\ref{thm2} under the assuption \eqref{assumption_epsilon} yields immediately the following
estimate, for any fixed $k\geq -1$,
\begin{equation} \label{thm2_eqn}
          \Bigl| H_{n-k,n} - \sqrt{\frac{n+1}{n-k+1}} \psi_{-k} \Bigr|
          = \mathcal O(\frac{1}{n^\beta}), \quad n \to \infty .
\end{equation}
This estimate should be compared with the asymptotic results \cite[Theorem~2.1, Theorem~2.2]{Nikos2014}, where the same rate is given for $k=-1$ and $n \to \infty$, but for fixed $k \geq 0$ only the rate $\mathcal O(1/n^{\beta/2})_{n\to \infty}$ is obtained. Furthermore, if $\Gamma\in U(\rho)$, then Theorem~\ref{thm2}

yields $\mathcal O(\rho^{2n})$, for any fixed $k\geq -1$, improving thus by $\rho^n$ the rate established in \cite[Theorem~2.3]{Nikos2014} for $k\geq 0$.

In contrast to Theorem \ref{thm1}, the numerical results presented in \cite[\S4]{Nikos2014} for $\Gamma$ piecewise analytic without cusps indicate an experimental rate of convergence of $\mathcal{O}(1/n^2)$ for $k\in \{ -1,2\}$, rather than the rate $\mathcal{O}(1/n)$ predicted by Theorem~\ref{thm2}.

\bigskip

The paper is organized as follows. In \S 2 we derive, in Lemma~\ref{M=I-C*C},  a link between the Gram matrix
$(\langle \fa_j,\fa_k \rangle_{L^2(G)})_{j,k}$ and the Grunsky matrix.
Further, in Corollary~\ref{cor_Grunsky}
we suggest a new matrix formalism and obtain, in Theorem~\ref{thm_bounds}, estimates in terms of $\varepsilon_n$ for the coefficients in the change of basis from
$\{ p_0,p_1,...,p_n\}$ to $\{ \fa_0,\fa_1,...,\fa_n\}$, and vice-versa.
This allows us to present, in Remark~\ref{rem_equivalence}, different proofs of the relations  \eqref{asymp_leading} and \eqref{asymp_norm}.
We conclude \S 2 with a detailed discussion of an ellipse, where several of the above asymptotic results are shown to be sharp.

In \S 3 we gather some known results from the literature relating spectral and asymptotic properties of the Grunsky operator and the size of $\varepsilon_n$ to the smoothness of $\Gamma$. In particular, we conclude that the Grunsky operator for domains with corners cannot be compact.

The proofs of our main Theorems~\ref{thm1} and~\ref{thm2} are given in \S4.

Finally, in \S 5 we present a numerical experiment suggesting that the rate in Theorem~\ref{thm1}
is sharp for domains with corners.

\section{The Grunsky matrix and quasiconformal boundaries}

The aim of this section is to describe the size of the Fourier coefficients $R_{j,n}\in \mathbb C$ of $f_n$ in the orthonormal basis $\{ p_n\}$, that is,
\begin{equation} \label{basis}
    \fa_n(z) = \sum_{j=0}^n p_j(z) R_{j,n} , \quad  R_{j,n} = \langle \fa_n,p_j \rangle_{L^2(G)}.
\end{equation}
We note, in particular, that from \eqref{asymp_leading}
\begin{equation}\label{eq:Rnn}
R_{n,n}=\sqrt{\frac{n+1}{\pi}}\frac{\gamma^{n+1}}{\lambda_n}\in(0,1].
\end{equation}

It will be convenient to use matrix calculus. In what follows we denote by $e_j$, $j \geq 0$, the $j$-th canonical vector in $\mathbb C^n$ as well as in $\ell^2$, the size depending on the context. The action of a bounded linear operator $B:\ell^2\to \ell^2$ can be described through matrix products, where we identify $B$ with its infinite matrix $(\langle Be_k,e_j\rangle_{\ell^2})_{j,k=0,1,...}$.
We will also consider the operator $\Pi_n : \mathbb C^n \to \ell^2$ with matrix representation
$$
     \Pi_n = \left[\begin{array}{cc} I \\ 0 \end{array}\right] \in \mathbb C^{\infty\times n} ,
     \quad \mbox{with adjoint} \quad
     \Pi_n^* = \left[\begin{array}{cc} I & 0 \end{array}\right] \in \mathbb C^{n \times \infty} ,
$$
so that
$$
      B_n = \Pi_n^* B \Pi_n = (B_{j,k})_{j,k=0,1,...,n-1},
$$
gives the principal submatrix of order $n$ of $B$ (sometimes called the $n$-th finite section).

Below we will make use the relations
\begin{equation}\label{eq:matine}
    \| B_{n-1} \|\le \|B_n\|\le\|B\Pi_n\|\le\|B\|=\sup_{k}\|B_k\|,
\end{equation}
where $\| \cdot \|$
denotes the euclidian vector norm as well as the subordinate matrix (operator) norm, i.e.,
$\| B_n \|^2$ is the largest of the eigenvalues of the Hermitian matrix $B_n^*B_n$,

Then, it follows from
the Grunsky inequality \eqref{Grunsky_inequality} that the infinite Grunsky matrix $C=(C_{n,k})_{n,k=0,1,...}$ satisfies $\| C_n \|\leq \|C \Pi_n\|\leq 1$, for all $n$, and thus $\| C \|\leq 1$. 
We will also use frequently the identity
\begin{equation} \label{**}
   \varepsilon_n = \| C e_n \|^2,
\end{equation}
which follows easily from \eqref{norm_residual}.
Moreover, according to \cite[Theorems~9.12 and~9.13]{Pommerenke1975}, $\Gamma$ is quasiconformal if and only if  $\| C \|<1$ (and more precisely $\Gamma$ is $\kappa$-quasiconformal if and only if $\| C \|\leq \kappa$).

We recall now from \cite[\S 5]{Kuehnau1986} a geometry, where $\| C \|$ can be computed explicitly. Further geometries are discussed in \cite[\S 5 and \S 6]{Kuehnau1986}.

\begin{example}
    If $\Gamma$ is piecewise analytic with a corner of outer angle $\omega\pi$, $0<\omega<2$, then K\"uhnau showed, with the help of the Golusin inequality \cite[\S 3.2]{Pommerenke1975},  that $\| C \| \geq |1-\omega|$, with equality $\| C \| = |1-\omega|=2/m$, for all regular polygons with $m$ vertices. 
    \qed
\end{example}

In what follows we set $R_{j,n}=0$, for $j>n$, so that the infinite matrix $R$ is upper triangular,
with positive diagonal; see \eqref{eq:Rnn}.
As a consequence, $R_n$, $n=0,1,\ldots$, is invertible and any principal submatrix of order $k\leq n$ of $(R_n)^{-1}$ is given by $(R_k)^{-1}$.
Furthermore, since by changing the  basis in \eqref{basis} we get that
\begin{equation} \label{basis2}
    p_n(z) = \sum_{j=0}^n \fa_j(z) R^{-1}_{j,n},
\end{equation}
we can  write for $0 \leq j \leq k$, without ambiguity,
$R^{-1}_{j,k}=(R^{-1})_{j,k}=((R_k)^{-1})_{j,k}$, for the entries of the (possibly unbounded) upper triangular infinite matrix $R^{-1}$, the inverse of $R$.

We are now ready to make, in the next lemma, the link between the Grunsky matrix and Faber polynomials. Such a link has been probably first studied by K\"uhnau in \cite{Kuehnau1985}, though 
the identity \eqref{M=I-C*C_1} below can also be traced in the works of
Johnston \cite[Lemma~4.3.8]{Johnston} and Suetin \cite[p.\ 13]{Suetin1974}, without a direct
reference, however, to the relation with the Grunsky matrix. The same identity is mentioned without proof in \cite[Eqn.\ (2.9)]{Shen2009}.

\begin{lemma}\label{M=I-C*C}
   If $\Gamma$ has two-dimensional Lebesgue measure zero,
   then, for all $n,k\geq 0$, there holds
   \begin{align}
        \langle \fa_n,\fa_k \rangle_{L^2(G)} &= \delta_{n,k}  -
        \langle \fe_n,\fe_k \rangle_{L^2(\mathbb D^*)}
        \label{M=I-C*C_1}
        \\
        &= \delta_{n,k} - e_k^*C^* C e_n .
        \label{M=I-C*C_2}
   \end{align}
\end{lemma}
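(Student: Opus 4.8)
The plan is to establish \eqref{M=I-C*C_1} first and then deduce \eqref{M=I-C*C_2} from the definition of $\fe_n$ via its Fourier coefficients. The key idea for \eqref{M=I-C*C_1} is that the map $g\mapsto \psi'\cdot(g\circ\psi)$ is an isometry from $L^2(G)$ into $L^2(\mathbb D^*)$ when $\Gamma$ has zero area: indeed, the change of variables $z=\psi(w)$ sends $G$ to $\mathbb D$, not $\mathbb D^*$, so one must be a little careful. The cleaner route is to work on the exterior. Start from \eqref{faber_residual}, which reads $\psi'(w)\fa_n(\psi(w)) = \sqrt{(n+1)/\pi}\,w^n + \fe_n(w)$ for $|w|>1$. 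Since $\fa_n$ is a polynomial in $z$, the function $\psi'\cdot(\fa_n\circ\psi)$ is, by \eqref{faber}, of the form $\sqrt{(n+1)/\pi}\,\gamma^{n+1}\psi'(w)\psi(w)^n+\cdots$; the point of \eqref{faber_residual} is precisely that its singular part at $\infty$ is exactly $\sqrt{(n+1)/\pi}\,w^n$, so $\fe_n\in L^2(\mathbb D^*)$ decays like $w^{-2}$.

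For the main computation I would use the fact that $\Gamma$ has zero area to write, for polynomials $P,Q$,
\begin{equation}\label{eq:transfer}
   \int_G P(z)\overline{Q(z)}\,dA(z) = \int_{\mathbb D} P(\psi(w))\overline{Q(\psi(w))}\,|\psi'(w)|^2\,dA(w),
\end{equation}
which is just the change of variables $z=\psi(w)$ (valid because $\psi$ extends to a homeomorphism of $\overline{\mathbb D}$ onto $\overline G$, $\Gamma$ being a Jordan curve of zero area, and is conformal on $\mathbb D$). Meanwhile one also has the complementary identity over the exterior: using that $\fe_n$ vanishes at $\infty$ and the Green's/area formula, the quantity $\langle \fe_n,\fe_k\rangle_{L^2(\mathbb D^*)}$ can be related to a contour integral over $|w|=1$, and likewise the left-hand side of \eqref{M=I-C*C_1} can be turned into a contour integral over $\Gamma$ via Green's theorem applied to the polynomials $\fa_n,\fa_k$. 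Parametrizing $\Gamma$ by $w\mapsto\psi(w)$, $|w|=1$, and substituting $\psi'(w)\fa_n(\psi(w))=\sqrt{(n+1)/\pi}\,w^n+\fe_n(w)$, the cross terms $\int_{|w|=1} w^n\overline{\fe_k(w)}$ vanish because $\fe_k$ is a sum of negative powers $w^{-(\ell+2)}$ with $\ell\ge 0$ (so all exponents $\le -2 < 0$, orthogonal to $w^n$ with $n\ge 0$ on the circle), and $\int_{|w|=1} w^n\overline{w^k} = 2\pi\,\delta_{n,k}$ after accounting for the $\sqrt{\pi}$ normalizations; what remains on the two sides differs exactly by $\langle\fe_n,\fe_k\rangle_{L^2(\mathbb D^*)}$, giving \eqref{M=I-C*C_1}.

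The passage from \eqref{M=I-C*C_1} to \eqref{M=I-C*C_2} is then Parseval: by \eqref{faber_residual}, the orthonormal system $\{\sqrt{(\ell+1)/\pi}\,w^{-(\ell+2)}\}_{\ell\ge 0}$ is orthonormal in $L^2(\mathbb D^*)$ (a direct computation: $\int_{\mathbb D^*} w^{-(\ell+2)}\overline{w^{-(m+2)}}\,dA = \pi/(\ell+1)\,\delta_{\ell,m}$), and $\fe_n = -\sum_{\ell\ge 0}\sqrt{(\ell+1)/\pi}\,w^{-(\ell+2)}\,C_{\ell,n}$, so $\langle\fe_n,\fe_k\rangle_{L^2(\mathbb D^*)} = \sum_{\ell\ge 0} C_{\ell,n}\overline{C_{\ell,k}} = \sum_{\ell} \overline{(Ce_k)_\ell}\,(Ce_n)_\ell = e_k^*C^*Ce_n$, using that $C$ is (real) symmetric so $\overline{C_{\ell,k}} = \overline{C_{k,\ell}}$; with the convention on $C$ being the normalized Grunsky matrix this is exactly $e_k^*C^*Ce_n$. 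Taking $n=k$ recovers \eqref{norm_residual} and hence the "last part of the proof" referenced earlier, and combined with \eqref{M=I-C*C_1} at $n=k$ it gives \eqref{eq:eps-fn}.

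The step I expect to be the main obstacle is making the Green's-theorem/contour-integral manipulation rigorous under the sole hypothesis that $\Gamma$ has zero area — in particular justifying \eqref{eq:transfer} and the boundary-integral representations without assuming $\Gamma$ rectifiable. The resolution is to avoid integrating over $\Gamma$ directly: instead approximate from inside, i.e. apply the identities on $\{|w|=\rho\}$ for $\rho<1$ (where everything is a genuine smooth contour integral, $\psi$ being conformal on a neighborhood of $\overline{\mathbb D}$... actually only on $\mathbb D$, so on $|w|=\rho<1$), use \eqref{eq:transfer} with $G$ replaced by $\psi(\{|w|<\rho\})$, let $\rho\uparrow 1$, and control the error by the fact that $\Gamma$ — and hence the "annular" region $G\setminus\psi(\{|w|<\rho\})$ — has area tending to $0$; this is where the zero-area hypothesis enters and where the estimate $\fe_n\in L^2(\mathbb D^*)$ (already secured above from the Grunsky inequality) is used to pass to the limit in the exterior integral.
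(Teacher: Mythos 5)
Your proposal goes wrong at its structural core: it confuses the interior and exterior conformal maps, and as a result the ``rigorization'' you propose for the main obstacle would not work. The map $\psi$ of the paper sends $\mathbb D^*$ onto $\Omega$, the \emph{exterior} of $\overline G$; it does not map $\mathbb D$ onto $G$, it need not be defined for $|w|<1$ at all, and it certainly does not ``extend to a homeomorphism of $\overline{\mathbb D}$ onto $\overline G$.'' Hence your transfer identity \eqref{eq:transfer} is false as written, and your plan to approximate from \emph{inside} by working on $\{|w|=\rho\}$, $\rho<1$, and on $\psi(\{|w|<\rho\})$ is not available. Even if you replaced $\psi$ there by the interior map $\varphi:\mathbb D\to G$, two things break: the expansions \eqref{faber_residual} and \eqref{primitive_Faber}, which carry the Grunsky coefficients, live on $|w|>1$ and are tied to the exterior map, so the connection to $C$ is lost; and the area of $G\setminus\varphi(\rho\mathbb D)$ tends to zero automatically by monotone convergence, so the zero-area hypothesis would play no role --- a sign that this is not where it is actually needed.

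The paper's proof approximates $G$ from the \emph{outside}: for $r>1$ it takes the level domains $G_r$ bounded by the analytic Jordan curves $\psi(\{|w|=r\})$, applies Green's formula on $G_r$ pairing the derivative $\fa_n=F_{n+1}'/\sqrt{\pi(n+1)}$ with the primitive $F_{k+1}/\sqrt{\pi(k+1)}$ (note: Green's formula needs one factor to be a derivative --- your sketch pairs $w^n+\fe_n$ against $\overline{w^k+\fe_k}$ on the circle, which is not the combination that arises), substitutes $w=\phi(z)$ to land on the genuine analytic contour $|w|=r$, and integrates the expansions term by term to get $\langle \fa_n,\fa_k\rangle_{L^2(G_r)}=\delta_{n,k}r^{2k+2}-\sum_\ell \overline{C_{\ell,k}}C_{\ell,n}r^{-2\ell-2}$. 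The zero-area hypothesis enters only in the limit $r\to 1+$, because $\bigcap_{r>1}G_r=G\cup\Gamma$, so $\langle\fa_n,\fa_k\rangle_{L^2(G_r)}\to\langle\fa_n,\fa_k\rangle_{L^2(G\cup\Gamma)}$, which equals $\langle\fa_n,\fa_k\rangle_{L^2(G)}$ precisely when $|\Gamma|=0$; no rectifiability, and not even the Jordan curve property, is used. Your Parseval step deducing \eqref{M=I-C*C_2} from \eqref{M=I-C*C_1} via the orthogonality $\int_{\mathbb D^*}w^{-(\ell+2)}\overline{w^{-(m+2)}}\,dA=\tfrac{\pi}{\ell+1}\delta_{\ell,m}$ is correct and matches the paper's final step, but the main identity \eqref{M=I-C*C_1} is not established by your argument.
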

\begin{proof}
For the sake of completeness we reproduce here the idea of the proof given in  \cite{Kuehnau1985}.

For $r>1$ we consider the level set $G_r:= \overline{\mathbb C} \setminus \{ \psi(w):|w|\geq r \}$, which has analytic boundary $\partial G_r$. The proof below is based on the use of Green's formula
   $$
         \int_{G_r} f(u) \overline{g'(u)} dA(u) = \frac{1}{2i} \int_{\partial G_r} f(u) \overline{g(u)} du ,
   $$
   for any $f$, $g$ analytic on $\overline{G_r}$.

    By comparing like powers of $v$ in \eqref{Grunsky_coefficients} and using \eqref{normalized_Grunsky_coefficients}, we have for $|w|>1$ that
   \begin{equation} \label{primitive_Faber}
          \frac{F_{k+1}(\psi(w))}{\sqrt{\pi(k+1)}} =
          \frac{w^{k+1}}{\sqrt{\pi(k+1)}} + \sum_{\ell=0}^\infty
          \frac{w^{-(\ell+1)}}{\sqrt{\pi(\ell+1)}} C_{\ell,k}.
   \end{equation}
   (We note that taking derivatives in \eqref{primitive_Faber} leads to
   \eqref{faber_residual}.)
   We thus
   obtain by applying Green's formula
   \begin{align*}
      \int_{G_r} \fa_n(z) \overline{\fa_k(z)} dA(z)
      &= \frac{1}{2i} \int_{\partial G_r}
      \frac{F_{n+1}'(z)}{\sqrt{\pi(n+1)}} \overline{\frac{F_{k+1}(z)}{\sqrt{\pi(k+1)}}} dz
      \\&= \frac{1}{2i} \int_{|w|=r}
      \frac{\psi'(w)F_{n+1}'(\psi(w))}{\sqrt{\pi(n+1)}} \overline{\frac{F_{k+1}(\psi(w))}{\sqrt{\pi(k+1)}}} dw.
   \end{align*}

   Next, inserting \eqref{faber_residual} and \eqref{primitive_Faber} in the latter integral we get the expression
   \begin{eqnarray*} &&
      \frac{1}{2i} \int_{|w|=r}
      \Bigl( \sqrt{\frac{n+1}{\pi}} w^n
    - \sum_{\ell=0}^\infty \sqrt{\frac{\ell+1}{\pi}}w^{-(\ell+2)} C_{\ell,n}
      \Bigr)
      \overline{\Bigl(
                \frac{w^{k+1}}{\sqrt{\pi(k+1)}} + \sum_{j=0}^\infty
          \frac{w^{-(j+1)}}{\sqrt{\pi(j+1)}} C_{j,k}
      \Bigr)} dw.
   \end{eqnarray*}
   Here the sums are uniformly convergent in $|w|=r$ and we can thus integrate term by term. Since,
   $$
          \frac{1}{2\pi i} \int_{|w|=r} w^j \overline{w^{\ell+1}} dw =
          \frac{r^{2\ell+2} }{2\pi i} \int_{|w|=r} w^j w^{-\ell-1} dw = r^{2\ell+2} \, \delta_{j,\ell},
   $$
   for $j,\ell \in \mathbb Z$,
   we therefore arrive at
   \begin{equation} \label{eqn.level}
      \langle \fa_n, \fa_k\rangle_{L^2(G_r)}
      = \delta_{n,k} r^{2k+2} - \sum_{\ell=0}^\infty \overline{C_{\ell,k}} C_{\ell,n} r^{-2\ell-2}.
   \end{equation}

   Clearly, the polynomials $\fa_n$ and $\fa_k$ are continuous in $\mathbb C$ and the closure $G \cup \Gamma$ of $G$ is the intersection of the monotone family $G_r$, for $r>1$.
   Therefore, by using our assumption on $\Gamma$, we conclude that
   $$
      \lim_{r\to 1+} \langle \fa_n, \fa_k\rangle_{L^2(G_r)}
      =       \langle \fa_n, \fa_k\rangle_{L^2(G \cup \Gamma)}
      =       \langle \fa_n, \fa_k\rangle_{L^2(G)}.
   $$

   From the inequality $\| C \|\leq 1$ we have that $Ce_k = (C_{j,k})_j \in \ell^2$, for all $k\geq 0$
    and hence $(\overline{C_{j,k}} C_{j,n})_j \in \ell^1$,
   implying that the term on the right-hand side of \eqref{eqn.level} has a (finite) limit for $r\to 1+$, given by
   $\delta_{n,k} - e_k^* C^* C e_n$. Moreover, by integrating \eqref{faber_residual} over $r\mathbb D^*$ and working as above we obtain
   $$
        \lim_{r\to 1+} \langle \fe_n, \fe_k\rangle_{L^2(r \mathbb D^*)}
        = \lim_{r\to 1+} \sum_{\ell=0}^\infty \overline{C_{\ell,k}} C_{\ell,n} r^{-2\ell-2} = \sum_{\ell=0}^\infty \overline{C_{\ell,k}} C_{\ell,n}
        = \langle \fe_n, \fe_k\rangle_{L^2(\mathbb D^*)},
   $$
   which, in view of \eqref{eqn.level}, yields both \eqref{M=I-C*C_1} and \eqref{M=I-C*C_2}.
\end{proof}

It should be noted that in the proof we did not use any assumptions on the boundary $\Gamma$ of $G$, other that it should have zero Lebesgue measure. In particular, we did not require that $\Gamma$ is a Jordan curve.

Next we apply Lemma~\ref{M=I-C*C} to quasiconformal $\Gamma$. Then,
from \eqref{basis} and the orthonormality of Bergman polynomials, we find that
\begin{equation} \label{eq.cholesky}
    \langle \fa_n,\fa_k \rangle_{L^2(G)} = \sum_{j=0}^{\min(n,k)} \overline{R_{j,k}} R_{j,n}.
\end{equation}

Recalling that $\| B \| = \| B^* \|=\sqrt{\| B^* B \|}$, 
the following result is an easy consequence of Lemma~\ref{M=I-C*C}.
\begin{corollary}\label{cor_Grunsky}
    If $\Gamma$ is quasiconformal, then the infinite Gram matrix
    $M=( \langle \fa_n,\fa_k \rangle_{L^2(G)} )_{k,n=0,1,...}$ can be decomposed as
    \begin{equation} \label{M=I-C*C_3}
         M = R^* R = I - C^*C,
    \end{equation}
    where $M$ and $R$ represent two bounded linear operators on $\ell^2$ with norms $\leq 1$.
     Furthermore, both $M$ and $R$ are boundedly invertible, with
     $\| M^{-1} \| = \| R^{-1} \|^2 \leq (1-\| C \|^2)^{-1}$.
\end{corollary}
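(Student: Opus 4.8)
The plan is to read off both factorizations of $M$ at the level of matrix entries from the preceding results, and then to upgrade these entrywise identities to identities of bounded, boundedly invertible operators by a finite-section argument that exploits the triangular structure of $R$. First I would record the operator-theoretic content of the identity $M=I-C^*C$. A quasiconformal curve has two-dimensional Lebesgue measure zero and satisfies $\|C\|<1$ (both recalled in \S1), so Lemma~\ref{M=I-C*C} in the form \eqref{M=I-C*C_2} tells us that $M$ and $I-C^*C$ have the same entries. Since $C$ is bounded with $\|C\|\le 1$, the operator $C^*C$ is bounded, self-adjoint and nonnegative with $0\le C^*C\le\|C\|^2 I$, so $I-C^*C$ is bounded, self-adjoint, and $(1-\|C\|^2)I\le I-C^*C\le I$. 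Identifying $M$ with its matrix, this already gives that $M$ is a bounded operator with $\|M\|\le 1$, positive, and (using $\|C\|<1$) boundedly invertible with $\|M^{-1}\|\le(1-\|C\|^2)^{-1}$, which settles the $I-C^*C$ part of \eqref{M=I-C*C_3} and the stated bound on $\|M^{-1}\|$.

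For the factorization $M=R^*R$, I would start from the entrywise relation \eqref{eq.cholesky}, where $R$ is the upper triangular matrix with positive diagonal from \eqref{basis}. Because $R$ is upper triangular, only the entries $R_{\ell,j},R_{\ell,k}$ with $\ell<n$ enter the top-left $n\times n$ block of $R^*R$, so $M_n=R_n^*R_n$ and hence $\|R_n\|^2=\|M_n\|\le\|M\|\le 1$ by \eqref{eq:matine}. Triangularity of $R$ also means that $R$ sends every vector supported on the first $n$ coordinates to a vector with the same support, on which it acts as $R_n$; since such vectors are dense in $\ell^2$, $R$ extends to a bounded operator with $\|R\|\le\sup_n\|R_n\|\le 1$, and the entrywise identity $M=R^*R$ then becomes an operator identity. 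The finite sections of $R^{-1}$ are the matrices $(R_n)^{-1}$, as noted before the statement; restricting the operator inequality $M\ge(1-\|C\|^2)I$ to the range of $\Pi_n$ gives $M_n\ge(1-\|C\|^2)I$, whence $\|(R_n)^{-1}\|^2=\|(M_n)^{-1}\|\le(1-\|C\|^2)^{-1}$ uniformly in $n$. Applying the same density argument to the upper triangular matrix $R^{-1}$ shows that $R^{-1}$ is bounded with $\|R^{-1}\|^2\le(1-\|C\|^2)^{-1}$ and that $RR^{-1}=R^{-1}R=I$ as operators, so $R$ is boundedly invertible. Finally $\|M^{-1}\|=\|(R^*R)^{-1}\|=\|R^{-1}(R^{-1})^*\|=\|R^{-1}\|^2$, which together with the previous bounds yields $\|M^{-1}\|=\|R^{-1}\|^2\le(1-\|C\|^2)^{-1}$.

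The routine ingredients here ($\|B\|^2=\|B^*B\|$, density of finitely supported vectors, the action of a triangular matrix on such vectors) are standard, so the one point that needs genuine care — and which I regard as the crux — is the passage from the entrywise Cholesky-type relation \eqref{eq.cholesky} to the boundedness and bounded invertibility of $R$: a priori $R$ and $R^{-1}$ are only formal upper triangular matrices, and it is precisely their triangular structure, together with the finite-section inequalities \eqref{eq:matine} and the identification $(R^{-1})_n=(R_n)^{-1}$, that lets the uniform bounds on the finite sections propagate to the infinite operator without invoking any compactness or trace-class hypothesis.
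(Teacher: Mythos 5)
Your proposal is correct and follows essentially the same route as the paper: entrywise identification of $M$ with both $R^*R$ and $I-C^*C$ via Lemma~\ref{M=I-C*C} and \eqref{eq.cholesky}, the finite-section Cholesky factorization $M_n=R_n^*R_n$ together with $I-M_n=\Pi_n^*C^*C\Pi_n\ge 0$ to get $\|M\|=\|R\|^2\le 1$ through \eqref{eq:matine}, the strict Grunsky bound $\|C\|<1$ for bounded invertibility of $M$, and $M^{-1}=R^{-1}(R^{-1})^*$ for the norm identity. The only (minor, and welcome) difference is that you establish the boundedness of $R^{-1}$ explicitly via the uniform bound $\|(R_n)^{-1}\|^2=\|(M_n)^{-1}\|\le (1-\|C\|^2)^{-1}$ on finite sections plus triangularity and density, whereas the paper simply invokes the existence of the triangular matrix $R^{-1}$ from \eqref{basis2} and reads the norm off $M^{-1}=R^{-1}(R^{-1})^*$.
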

\begin{proof}
    Equality for the entries of the matrices occurring in \eqref{M=I-C*C_3} follows from Lemma~\ref{M=I-C*C} and \eqref{eq.cholesky}. It remains to show that the involved matrices represent bounded linear operators on $\ell^2$.

    From \eqref{eq.cholesky} we obtain the Cholesky decomposition
    \begin{equation}\label{eq:Mn=Rn*Rn}
     M_n=R_n^*R_n,
    \end{equation}
    which, in view of the fact that $R_n$ is invertible, implies that $M_n$ is Hermitian positive definite. Also, taking finite sections in \eqref{M=I-C*C_3}, we get that
    $$
         I - M_n=\Pi_n^* C^*C \Pi_n
    $$
    is Hermitian positive semi-definite, and hence the eigenvalues of $M_n$ are elements of $(0,1]$,
    that is $\| M_n \|=\| R_n\|^2 \leq 1$, and thus $\| M \|=\| R \|^2 \leq 1$ by \eqref{eq:matine}, as claimed above.

    Our assumption on $\Gamma$ implies than $\| C^* C \| = \| C \|^2<1$, and therefore a Neumann series argument in $\ell^2$ shows that $M$ is boundedly invertible, with $\| M^{-1} \|\leq 1/(1-\| C \|^2)$.

    Finally, the existence of $R^{-1}$ has been established above, as a result of the representation
    \eqref{basis2}. Hence, from \eqref{M=I-C*C_3}, $R^{-1}(R^{-1})^*=M^{-1}$, leading to the required relation $\|M^{-1}\|=\|R^{-1}\|^2$.
\end{proof}


Our next result provides estimates for the coefficients $R_{j,k}$ and $R^{-1}_{j,k}$ in
(\ref{basis}) and (\ref{basis2}).
\begin{theorem}\label{thm_bounds}
   If $\Gamma$ is quasiconformal, then for all $j\geq 0$,
   \begin{equation} \label{bound_R_row}
       \max\Bigl( \| e_j^* (I-R^*) \| , \| e_j^* (R^{-1}-I) \|
       \Bigr) \leq \| e_j^* (R^{-1}-R^*) \| \leq \sqrt{\varepsilon_j \, \frac{\| C \|^2 }{1-\| C \|^2}}.
   \end{equation}
   Furthermore, for all $0 \leq j\leq n$,
   \begin{equation}  \label{bound_R_entry}
        \max \Bigl( | R_{j,n}-\delta_{j,n}| , | R^{-1}_{j,n} -\delta_{j,n}| \Bigr)
        \leq \frac{\sqrt{\varepsilon_j \, \varepsilon_n} }{1-\| C \|^2}.
   \end{equation}
\end{theorem}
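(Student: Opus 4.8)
\emph{Strategy and toolkit.} The whole argument rests on the factorization $M=R^{*}R=I-C^{*}C$ from Corollary~\ref{cor_Grunsky}, which gives $R^{*}=MR^{-1}$ and hence the identity
\[
   R^{-1}-R^{*}=(I-M)R^{-1}=C^{*}CR^{-1},\qquad\text{equivalently}\qquad R=(R^{*})^{-1}(I-C^{*}C).
\]
I will combine this with the elementary facts $\|Ce_{j}\|^{2}=\varepsilon_{j}$ (from \eqref{**}), $R_{jj}\in(0,1]$ (from \eqref{eq:Rnn}), the triangular shapes of $R$ and $R^{-1}$, the push-through identity $M^{-1}=I+C^{*}(I-CC^{*})^{-1}C$, the bound $\|R^{-1}\|^{2}=\|M^{-1}\|\le(1-\|C\|^{2})^{-1}$, and the Loewner-order inequality $C\Pi_{m}\Pi_{m}^{*}C^{*}\preceq CC^{*}$ together with $\|C\Pi_{m}\Pi_{m}^{*}C^{*}\|\le\|C\|^{2}<1$.

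\emph{The row bound \eqref{bound_R_row}.} The rightmost inequality is immediate from the master identity: $e_{j}^{*}(R^{-1}-R^{*})=(Ce_{j})^{*}CR^{-1}$, so $\|e_{j}^{*}(R^{-1}-R^{*})\|\le\|Ce_{j}\|\,\|C\|\,\|R^{-1}\|\le\sqrt{\varepsilon_{j}\,\|C\|^{2}/(1-\|C\|^{2})}$. For the other two, compare the $j$-th rows entry by entry: since $R^{*}$ is lower triangular and $R^{-1}$ upper triangular, in column $k$ either $(R^{-1}-I)_{jk}$ vanishes (when $k<j$) or $(I-R^{*})_{jk}$ vanishes (when $k>j$), while the surviving off-diagonal entry equals $(R^{-1}-R^{*})_{jk}$; on the diagonal $|1-R_{jj}|$ and $|R_{jj}^{-1}-1|$ are both $\le|R_{jj}^{-1}-R_{jj}|$ because $R_{jj}\in(0,1]$. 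Hence every entry of $e_{j}^{*}(I-R^{*})$ and of $e_{j}^{*}(R^{-1}-I)$ is dominated in modulus by the corresponding entry of $e_{j}^{*}(R^{-1}-R^{*})$, and the Euclidean-norm inequalities follow.

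\emph{Reduction for \eqref{bound_R_entry}, and the diagonal case.} The off-diagonal entries are all controlled by the single vector $x_{m}:=CR^{-1}e_{m}$ (which plays for the Bergman polynomial $p_{m}$ the role the vector $Ce_{m}$ plays for the Faber polynomial $\fa_{m}$). From the master identity, $(R^{-1})_{jn}=(R^{-1}-R^{*})_{jn}=(Ce_{j})^{*}x_{n}$ for $j<n$; and from \eqref{basis}, \eqref{basis2}, orthonormality of the $p$'s and $\langle\fa_{n},\fa_{i}\rangle=\delta_{ni}-(C^{*}C)_{ni}$ one gets $R_{jn}=\langle\fa_{n},p_{j}\rangle=-\langle Ce_{n},x_{j}\rangle$ for $j<n$. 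Expanding $x_{m}=\sum_{i\le m}(R^{-1})_{im}Ce_{i}$ and substituting $(R^{-1})_{im}=(Ce_{i})^{*}x_{m}$ for $i<m$ yields the finite-section fixed-point relation
\[
   (I-C\Pi_{m}\Pi_{m}^{*}C^{*})\,x_{m}=(R^{-1})_{mm}\,Ce_{m},\qquad\text{i.e.}\qquad x_{m}=(R^{-1})_{mm}\,A_{m}Ce_{m},\quad A_{m}:=(I-C\Pi_{m}\Pi_{m}^{*}C^{*})^{-1}\succ0,
\]
and, taking the $m$-th component of $R^{-1}e_{m}=R^{*}e_{m}+C^{*}x_{m}$, the scalar identities
\[
   \langle A_{m}Ce_{m},Ce_{m}\rangle=1-R_{mm}^{2},\qquad (R^{-1})_{mm}^{2}\bigl(1-R_{mm}^{2}\bigr)=(R^{-1})_{mm}^{2}-1 .
\]
Two estimates are now at hand: $(R^{-1})_{mm}^{2}-1\le(M^{-1})_{mm}-1=\langle(I-CC^{*})^{-1}Ce_{m},Ce_{m}\rangle\le\varepsilon_{m}/(1-\|C\|^{2})$ (using $(M^{-1})_{mm}=\|(R^{-1})^{*}e_{m}\|^{2}\ge(R^{-1})_{mm}^{2}$ and the push-through identity), and $\langle A_{m}Ce_{j},Ce_{j}\rangle\le\langle(I-CC^{*})^{-1}Ce_{j},Ce_{j}\rangle\le\varepsilon_{j}/(1-\|C\|^{2})$ for every $j$ (since $A_{m}\preceq(I-CC^{*})^{-1}$). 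This already gives the diagonal part of \eqref{bound_R_entry}: $|R_{mm}-\delta_{mm}|=1-R_{mm}\le1-R_{mm}^{2}\le\varepsilon_{m}/(1-\|C\|^{2})$ and $|(R^{-1})_{mm}-\delta_{mm}|=(R^{-1})_{mm}-1\le\tfrac12\bigl((R^{-1})_{mm}^{2}-1\bigr)\le\varepsilon_{m}/(1-\|C\|^{2})$.

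\emph{Off-diagonal case and the main point.} For $j<n$ one must not use the plain Cauchy--Schwarz $|(Ce_{j})^{*}x_{n}|\le\|Ce_{j}\|\,\|x_{n}\|$, which would leave a spurious factor $(R^{-1})_{nn}\ge1$; instead use Cauchy--Schwarz in the inner product $\langle\,\cdot\,,\,\cdot\,\rangle_{A_{n}}:=\langle A_{n}\,\cdot\,,\,\cdot\,\rangle$ (legitimate since $A_{n}\succ0$). Writing $(R^{-1})_{jn}=(Ce_{j})^{*}x_{n}=(R^{-1})_{nn}\langle Ce_{n},Ce_{j}\rangle_{A_{n}}$ gives
\[
   |(R^{-1})_{jn}|\le(R^{-1})_{nn}\sqrt{\langle A_{n}Ce_{n},Ce_{n}\rangle}\,\sqrt{\langle A_{n}Ce_{j},Ce_{j}\rangle}=\sqrt{(R^{-1})_{nn}^{2}-1}\;\sqrt{\langle A_{n}Ce_{j},Ce_{j}\rangle}\le\frac{\sqrt{\varepsilon_{j}\varepsilon_{n}}}{1-\|C\|^{2}},
\]
where the middle equality is exactly the second scalar identity above and the last step uses the two scalar estimates. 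The bound for $R_{jn}$ is identical, starting from $R_{jn}=-\langle Ce_{n},x_{j}\rangle=-(R^{-1})_{jj}\langle Ce_{n},Ce_{j}\rangle_{A_{j}}$ and using $\langle\,\cdot\,,\,\cdot\,\rangle_{A_{j}}$. The step I expect to require the most care is precisely this last one: seeing that the right normalization is to pass to the $A_{m}$-inner product, so that the identity $(R^{-1})_{mm}^{2}(1-R_{mm}^{2})=(R^{-1})_{mm}^{2}-1$ absorbs the diagonal factor exactly and produces the clean constant $1/(1-\|C\|^{2})$ rather than a larger power of it. Deriving the fixed-point relation for $x_{m}$ and the companion identity $1-R_{mm}^{2}=\langle A_{m}Ce_{m},Ce_{m}\rangle$ is the other place where some attention is needed; everything else is routine.
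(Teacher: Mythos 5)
Your proof is correct, and its skeleton coincides with the paper's: the master identity $R^{-1}-R^{*}=(I-R^{*}R)R^{-1}=C^{*}CR^{-1}$, the entrywise comparison of rows via triangularity for \eqref{bound_R_row}, and a finite-section resolvent representation of $CR^{-1}e_{n}$ for \eqref{bound_R_entry}. The one genuine divergence is in that last representation. The paper writes $R^{-1}e_{n}=\Pi_{n+1}M_{n+1}^{-1}R_{n+1}^{*}e_{n}=R_{n,n}\,\Pi_{n+1}M_{n+1}^{-1}e_{n}$, i.e.\ it keeps the index $n$ \emph{inside} the finite section; the Neumann series then gives $e_{j}^{*}(R^{-1}-R^{*})e_{n}=R_{n,n}(Ce_{j})^{*}(I-C\Pi_{n+1}\Pi_{n+1}^{*}C^{*})^{-1}Ce_{n}$, where the prefactor $R_{n,n}\le 1$ is harmless and a plain Cauchy--Schwarz with $\|(I-C\Pi_{n+1}\Pi_{n+1}^{*}C^{*})^{-1}\|\le(1-\|C\|^{2})^{-1}$ finishes the job. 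Your fixed-point relation $(I-C\Pi_{n}\Pi_{n}^{*}C^{*})x_{n}=(R^{-1})_{nn}Ce_{n}$ leaves the index $n$ \emph{outside} the section, which is equally valid but produces the prefactor $(R^{-1})_{nn}\ge 1$; you then have to absorb it through the weighted Cauchy--Schwarz in the $A_{n}$-inner product together with the identities $\langle A_{n}Ce_{n},Ce_{n}\rangle=1-R_{nn}^{2}$ and $(R^{-1})_{nn}^{2}(1-R_{nn}^{2})=(R^{-1})_{nn}^{2}-1$, plus the push-through identity and operator monotonicity of $X\mapsto(I-X)^{-1}$. All of these steps check out (and your scalar identities are a nice by-product, recovering \eqref{asymp_leading} along the way), but the paper's choice of section size makes the "most delicate step" you flag simply disappear. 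Your handling of the diagonal case and the identification $R_{j,n}=-\langle Ce_{n},x_{j}\rangle$ for $j<n$ agree, up to conjugation conventions, with the paper's use of $R_{j,n}=e_{n}^{*}(R^{-1}-R^{*})e_{j}$ combined with \eqref{R0bis}.
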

\begin{proof}
   We first observe that from Lemma~\ref{M=I-C*C} with $k=n$,  \eqref{basis} and \eqref{**},
   we have
   \begin{equation} \label{R0}
      \| C e_n\|^2 = \varepsilon_n = 1 - \| f_n \|_{L^2(G)}^2 = 1 - \sum_{j=0}^n |R_{j,n}|^2,
   \end{equation}
    and recall from \eqref{eq:Rnn} that $R_{n,n}\in(0,1]$. This, in particular, implies that
   \begin{equation} \label{R0bis}
         0 \leq \max (1-R_{n,n},\frac{1}{R_{n,n}}-1 ) \leq \frac{1}{R_{n,n}}-R_{n,n} .
   \end{equation}

   Next we establish the norm estimates in (\ref{bound_R_row}). For this we note that by
   Corollary~\ref{cor_Grunsky},
   \begin{equation}\label{eq:R-R}
         R^{-1} - R^* = (I-R^*R)R^{-1} = C^*C R^{-1}
   \end{equation}
   and hence, for any $j \geq 0$,
   $$
        \| e_j^* (R^{-1} - R^*) \|^2 =
        \| (Ce_j)^* C R^{-1} \|^2
        \leq \varepsilon_j \| C R^{-1} \|^2 \leq  \frac{\varepsilon_j \| C \|^2}{1-\| C \|^2}.
   $$
   This yields the second inequality in \eqref{bound_R_row}. Taking into account \eqref{R0bis} in conjunction with the obvious relation
   $$
              \| e_j^* (R^{-1} - R^*) \|^2 =
        \sum_{n=j+1}^\infty |R^{-1}_{j,n}|^2 + | \frac{1}{R_{j,j}}-R_{j,j} |^2
        + \sum_{n=0}^{j-1} |R_{n,j}|^2,
   $$ we arrive at the first inequality in \eqref{bound_R_row}.

   In order to obtain estimates for each entry of $R^{-1}-I$ and $I-R^*$, we argue in terms of the entries of $R^{-1}-R^*$. To this end, we recall that $R_{n+1}$ and $R^*_{n+1}$ are both bounded and invertible and thus so does $M_{n+1}$, in view of \eqref{eq:Mn=Rn*Rn}. Furthermore,
   from Corollary~\ref{cor_Grunsky},
   \begin{equation}\label{eq:MP}
   M_{n+1}= \Pi_{n+1}^* M \Pi_{n+1} =  I - \Pi_{n+1}^* C^* C \Pi_{n+1},
   \end{equation}
   where $\| C \Pi_{n+1} \| \leq \| C \| < 1$ by (\ref{eq:matine}). 
   
   According to the upper triangular structure of $R_{n+1}$, we find that
   $$
        C R^{-1} e_n = C \Pi_{n+1} R_{n+1}^{-1} e_n
        = C \Pi_{n+1} M_{n+1}^{-1} R_{n+1}^* e_n = R_{n,n} \, C \Pi_{n+1} M_{n+1}^{-1} e_n.
   $$
   Hence, by using a standard Neumann series argument we obtain from \eqref{eq:R-R} and \eqref{eq:MP},
   for any $n,j\geq 0$, that
   \begin{align*}
        e_j^* (R^{-1} - R^*) e_n
        &= R_{n,n}   (Ce_j)^*
        C \Pi_{n+1} M_{n+1}^{-1} e_n
\\
        &= R_{n,n}   (Ce_j)^*
        C \Pi_{n+1} \sum_{\ell=0}^\infty (\Pi_{n+1}^*C^*C \Pi_{n+1})^\ell e_n
\\
        &= R_{n,n}   (Ce_j)^*
        \sum_{\ell=0}^\infty (C \Pi_{n+1} \Pi_{n+1}^*C^*)^\ell C e_n
\\
        &= R_{n,n}   (Ce_j)^*(I-C \Pi_{n+1}\Pi_{n+1}^*C^*)^{-1}C e_n .
   \end{align*}
   Now, taking norms and using \eqref{eq:Rnn}, \eqref{eq:matine} and \eqref{**}, we get
   \begin{equation} \label{aim77}
        | e_j^* (R^{-1} - R^*) e_n | \leq R_{n,n} \frac{\sqrt{\varepsilon_j \varepsilon_n}}{1-\| C\|^2}\leq \frac{\sqrt{\varepsilon_j \varepsilon_n}}{1-\| C\|^2}.
   \end{equation}
   and recalling that $R^{-1}$ and $R$ are upper triangular, we conclude, for $j<n$, that
   \begin{equation*}
   R_{j,n}^{-1}= e_j^* (R^{-1} - R^*) e_n\quad \textup{and}\quad  R_{j,n}= e_n^* (R^{-1} - R^*) e_j
   \end{equation*}

   Thus \eqref{bound_R_entry}, with $0\leq j<n$, follows immediately from \eqref{aim77}, whereas
   for the diagonal entries $j=n$, we need to combine \eqref{aim77} with \eqref{R0bis}.
\end{proof}

\begin{remark}\label{rem_equivalence}
   With the estimates in Thorem~\ref{thm_bounds} we can produce more direct proofs of \eqref{asymp_leading} and \eqref{asymp_norm}.

   Since $R_{n,n}^{-1}=1/R_{n,n}$, we obtain from \eqref{eq:Rnn}, \eqref{R0} and \eqref{aim77}, for $j=n$,
   $$
       \varepsilon_n \leq 1 - R_{n,n}^2  \leq \frac{1}{R_{n,n}} - R_{n,n} \leq \frac{\varepsilon_n}{1-\| C \|^2},
   $$
   which yields \eqref{asymp_leading}.

   Next, from \eqref{basis} we have
   $$
       p_n(z) - \fa_n(z) = (p_0(z),...,p_n(z))( I- R_{n+1})e_n.
   $$
   Thus, using \eqref{bound_R_row} we see that
   \begin{equation} \label{asymp_norm2}
      \| \fa_n - R_{n,n} p_n \|_{L^2(G)}^2
      = \sum_{j=0}^{n-1} | R_{j,n} |^2 \leq \|  ( I- R_{n+1})e_n \|^2
    \leq \frac{\varepsilon_n \, \| C \|^2}{1-\| C \|^2},
   \end{equation}
   and the result (\ref{asymp_norm}) follows because
   \begin{equation*} \label{asymp_norm2-2}
      \| \fa_n - p_n \|_{L^2(G)}^2=\|  ( I- R_{n+1})e_n \|^2.
   \end{equation*}
  \qed
\end{remark}

   It is interesting to note that under the assumption $\Gamma$ is quasiconformal,
   (whence $\| C \| \leq \kappa$, for some $\kappa<1$), a similar inequality
  to (\ref{asymp_norm2}), namely
   $$
      \| \fa_n - R_{n,n} p_n \|_{L^2(G)}^2 \leq \frac{\varepsilon_n \, \lambda^2}{1-\lambda^2},
   $$
   with $\lambda\in [0,1)$, has been established in \cite[Theorem~2.1]{Nikos2013} by
   using a quasiconformal reflection argument.

\subsection{An example}\label{example_ellipse}
We conclude this section with a detailed study of asymptotics when $\Gamma$ is an ellipse. In this case \eqref{faber_residual} is very simple, allowing us to give explicitly the matrix $C$, and thus to illustrate many of the results presented so far.

    Let $G$ be the interior of the ellipse with semi-axes $(r\pm r^{-1})/2$, for some $r>1$. Then, the corresponding conformal map
    $\psi:\mathbb D^*\mapsto \Omega$ is given by $$
          \psi(w)=\frac{1}{2}( rw + \frac{1}{rw})
    $$
    and $\gamma=2/r$. Clearly, $\psi$ has an analytic continuation for $|w|>0$, which is however only univalent for $|w|>1/r\in (0,1)$, as the equation $\psi'(w)=0$ reveals. Hence $\Gamma\in U(1/r)$ and thus $\varepsilon_n=\mathcal O(r^{-2n})$ by \eqref{summarize}.
    We show that, for ellipses, this estimate can be improved.

    Inserting the formula for $\psi(w)$ in \eqref{Grunsky_coefficients}, we deduce the following expression  for the associated Faber polynomials
    $$
          F_{n}(\psi(w)) = w^{n} + r^{-2n} w^{-n}
    $$
    for $n \geq 1$ and $w\in \mathbb D^*$, see, e.g., \cite[p.~134]{SmLe68}. Thus, for $w\in \mathbb D^*$,
    \begin{equation} \label{cheby1}
        r_n(w)=\psi'(w)f_n(\psi(w)) - \sqrt{\frac{n+1}{\pi}} w^n = -
        \sqrt{\frac{n+1}{\pi}} \frac{r^{-2n-2}}{w^{n+2}}.
    \end{equation}
    Comparing with \eqref{faber_residual} and \eqref{norm_residual} we conclude that
    \begin{equation} \label{cheby2}
         C=\textup{diag}(r^{-2},r^{-4},...) ,
         \quad \| C \|=r^{-2} ,
         \quad
         \varepsilon_n = \| Ce_n\|^2 = \| r_n\|^2_{L^2(\mathbb D^*)} = r^{-4n-4},
    \end{equation}
     cf. \cite[\S 3]{Kuehnau1986}.

     It follows therefore from Corollary~\ref{cor_Grunsky} that, like $C$, also $M,R,$ and $R^{-1}$ are
     diagonal, with $R^2_{n,n}={1-\varepsilon_n}$. Since $0< R_{n,n}\le 1$, this leads to the chain of relations
    $$
          \frac{\varepsilon_n}{2} \leq \| e_n^* (I-R^*) \| = 1 - R_{n,n} \leq \frac{1}{R_{n,n}} - 1
          =\| e_n^* (I-R^{-1}) \|
          \leq \frac{\varepsilon_n}{1-\| C \|^2},
    $$
     which, in conjunction with (\ref{cheby2}) show that the estimate \eqref{bound_R_entry}, for $j=n$, is sharp, up to some constant,  whereas \eqref{bound_R_row} is not.

    In view of \eqref{basis}, a diagonal $R$ also implies the well-known fact \cite[p.~547]{Henrici} that the $n$-th Bergman polynomial for ellipses is proportional to $F_{n+1}'$, and more precisely
    \begin{equation} \label{cheby3}
         \fa_n(z) = R_{n,n} p_n(z) , \quad
         R_{n,n} = \sqrt{1-\varepsilon_n} = \sqrt{\frac{n+1}{\pi} \frac{\gamma^{2n+2}}{\lambda_n^2}} .
    \end{equation}
    Thus we obtain equality in \eqref{asymp_leading}, i.e.,
    $$
          1 - \frac{n+1}{\pi} \frac{\gamma^{2n+2}}{\lambda_n^2}=\varepsilon_n.
    $$
    On the other hand, both \eqref{asymp_norm} and \eqref{asymp_outside} are not sharp, since \eqref{cheby3} implies
    $$
          \| f_n - p_n \|^2_{L^2(G)} = (1-R_{n,n})^2 = \varepsilon_n^2/4+\mathcal O(\varepsilon_n^3)
    $$
    and for $z\in \Omega$,
    $$
          \frac{\fa_n(z)}{p_n(z)} = R_{n,n} = 1 + \mathcal O(\varepsilon_n).
    $$

    Finally, it turns out that the exponential term $\sqrt{\varepsilon_n}=r^{-2n-2}$ in our Theorem~\ref{thm1} cannot be improved for ellipses. Indeed, for $z\in \Omega$ we find using \eqref{cheby1}--\eqref{cheby3} and the fact that $|\phi(z)|>1$,
    \begin{align*}
         \sqrt{\frac{\pi}{n+1}} \frac{p_n(z)}{\phi'(z)\phi(z)^n} - 1
         &=
         \sqrt{\frac{\pi}{n+1}} \frac{\fa_n(z)}{R_{n,n} \phi'(z)\phi(z)^n} - 1
\\&= \frac{1}{R_{n,n}} \Bigl( 1 - \frac{r^{-2n-2}}{\phi(z)^{2n+2}} \Bigr) - 1 \\
&= \mathcal O(\varepsilon_n + \frac{\sqrt{\varepsilon_n}}{|\phi(z)|^{2n}})
= \mathcal O(\frac{\sqrt{\varepsilon_n}}{n^{\beta/2}})
    \end{align*}
    for all $\beta>0$, uniformly on compact subsets $K$ of $\Omega$.
    Clearly, in 
    the right-hand side we may not replace $\sqrt{\varepsilon_n} = r^{-2n-2}$ by $\tau^{-2n}$, for some
    $\tau>r$ independent of $K$.

\section{The Grunsky matrix and smooth boundaries}

All asymptotic results obtained so far depend on whether
$\varepsilon_n=\| Ce_n\|^2$ tends to zero, and with what speed. We have already seen in \eqref{summarize} three different examples of smoothness of $\Gamma$, for which we have further information on $\varepsilon_n$, due to the works of Carleman, Suetin and one of the authors \cite{Carleman1923,Suetin1974,Nikos2013}. The aim of this section is to gather other known results in the literature about spectral properties of the Grunsky matrix, and relate them to the smoothness of $\Gamma$.

Before going further, it will be useful to recall the generating function for the Grunsky matrix, as well as some basic facts on compact operators.
For $w \in \mathbb D^*$ we define the infinite vector
\begin{equation} \label{y(w)}
      y(w) = \Bigl(  \sqrt{\frac{\ell+1}{\pi}}\frac{1}{w^{\ell+2}} \Bigr)_{\ell=0,1,...} \in \ell^2, \quad
      \mbox{with} \quad
      \| y(w)\|^2 = \frac{1}{\pi (|w|^2-1)^2},
\end{equation}
by elementary computations.
The way that $C$ depends on $\psi$ is quite involved. To see this,
for distinct $w,v\in \mathbb D^*$, we differentiate \eqref{Grunsky_coefficients} with respect to $v$ and $w$ and employ \eqref{normalized_Grunsky_coefficients} to arrive at
the generating function \begin{align*}
        \frac{1}{\pi}\Bigl( \frac{\psi'(w)\psi'(v)}{(\psi(v)-\psi(w))^2} - \frac{1}{(v-w)^2}\Bigr)
        &= - y(v)^T C y(w) = - \langle C y(w),y(\overline v) \rangle_{\ell^2}
        \\&= \sum_{n=0}^\infty \sqrt{\frac{n+1}{\pi}}\frac{\fe_{n}(w)}{v^{n+2}},
\end{align*}
where in the last equality we made use of \eqref{faber_residual}.

For $w=v$ we get a link with the Schwarzian derivative of $\psi$ in $\Omega$,
see, e.g., \cite[pp.\ 2128--2129]{Shen2009},
\begin{equation} \label{Sch}
     S_\psi(w):= 
     \frac{\psi'''(w)}{\psi'(w)} - \frac{3}{2}\Bigl( \frac{\psi''(w)}{\psi'(w)}\Bigr)^2
     = - {6\pi} y(w)^T C y(w),
\end{equation}
a well-known quantity to measure smoothness of $\psi$ or $\Gamma$ \cite[\S 11.2 and \S 1.5]{Pommerenke1992}. In particular, using \eqref{y(w)} we find that, for $w\in \mathbb D^*$,
$$
     \frac{(|w|^2-1)^2}{6} | S_\psi(w) | = \frac{|y(w)^T C y(w)|}{\| y(w) \|^2} \leq \| C \|.
$$

Next we recall a number of facts from operator theory regarding compact operators in $l^2$.

An operator $B$ acting on $\ell^2$ is called compact if the image under $B$ of any bounded sequence in $\ell^2$ contains a subsequence which is convergent in norm \cite[\S III.4.1]{Kato1980}. In particular, we deduce that $Be_n \to 0$, as $n\to\infty$.
Also, $B$ is compact if and only if the sequence of the singular values of $B$ tends to $0$ \cite[\S V.2.3]{Kato1980}.

$B$ is called compact of Schatten class of index $p$ (trace class if $p=1$, Hilbert-Schmidt class if $p=2$) if the sequence of singular values of $B$ is an element of $\ell^p$
\cite[\S X.1.3]{Kato1980}. In particular $B^*B$ is of trace class if and only if $B$ is of Hilbert-Schmidt class, i.e., $\sum_{n=0}^\infty \| B e_n \|^2<\infty$.

Finally, the product of a compact operator with a bounded operator is compact 
\cite[Theorem III.4.8]{Kato1980}, and so is the adjoint of a compact operator 
\cite[Theorem III.4.10]{Kato1980}.
Similar properties hold for operators of Schatten class of index $p$ \cite[\S X.1.3]{Kato1980}.

In view of the above, Corollary~\ref{cor_Grunsky} and Theorem~\ref{thm_bounds} lead to the following.

\begin{corollary}\label{cor_Schatten}
    $C$ is compact (of $p$-Schatten class) if and only if $I-M$ and/or $I-M^{-1}$ are compact
    (of $p/2$-Schatten class). Furthermore, if $C$ is Hilbert-Schmidt then so are both $I-R$ and $I-R^{-1}$.
\end{corollary}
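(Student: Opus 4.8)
The plan is to read everything off from the identity $I-M=C^{*}C$ of Corollary~\ref{cor_Grunsky} and from the row estimates of Theorem~\ref{thm_bounds}; no new analytic input is needed, only the spectral bookkeeping and the elementary stability properties of the operator ideals recalled above.

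First I would set up the dictionary between $C$ and $C^{*}C=I-M$. Since $C^{*}C$ is self-adjoint and positive, its singular values coincide with its eigenvalues, which by definition of the singular values of $C$ are exactly the squares $s_{n}(C)^{2}$. Hence $s_{n}(C)\to 0$ if and only if $s_{n}(C^{*}C)\to 0$, and $(s_{n}(C))\in\ell^{p}$ if and only if $(s_{n}(C^{*}C))\in\ell^{p/2}$. Invoking the characterisations of compactness and of the $p$-Schatten class through the singular values, this says precisely that $C$ is compact (resp.\ of $p$-Schatten class) if and only if $I-M=C^{*}C$ is compact (resp.\ of $p/2$-Schatten class).

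Next I bring in $M^{-1}$. By Corollary~\ref{cor_Grunsky} we have $\|C^{*}C\|=\|C\|^{2}<1$, so a Neumann series gives $M^{-1}=(I-C^{*}C)^{-1}$ and therefore $I-M^{-1}=-\,C^{*}C\,M^{-1}$. As $M^{-1}$ is bounded, the stability of the compact operators (resp.\ of the $p/2$-Schatten class) under multiplication by a bounded operator shows that $I-M$ compact (resp.\ $p/2$-Schatten) implies $I-M^{-1}$ compact (resp.\ $p/2$-Schatten). Conversely, $I-M=-\,M\,(I-M^{-1})$ with $M$ bounded gives the reverse implication. Combined with the previous paragraph, this yields the asserted equivalences, the ``and/or'' being legitimate because all three statements turn out to be equivalent.

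Finally, for the Hilbert--Schmidt ($p=2$) part I would estimate the Hilbert--Schmidt norms of $I-R$ and $I-R^{-1}$ by summing the row bounds of \eqref{bound_R_row}. Using $\|B\|_{HS}^{2}=\sum_{j}\|e_{j}^{*}B\|^{2}$ together with the invariance of $\|\cdot\|_{HS}$ under taking adjoints,
$$
\|I-R\|_{HS}^{2}=\|I-R^{*}\|_{HS}^{2}=\sum_{j\ge 0}\|e_{j}^{*}(I-R^{*})\|^{2}
\le\frac{\|C\|^{2}}{1-\|C\|^{2}}\sum_{j\ge 0}\varepsilon_{j},
$$
and in the same way $\|I-R^{-1}\|_{HS}^{2}\le\frac{\|C\|^{2}}{1-\|C\|^{2}}\sum_{j\ge 0}\varepsilon_{j}$. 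By \eqref{**} we have $\sum_{j}\varepsilon_{j}=\sum_{j}\|Ce_{j}\|^{2}=\|C\|_{HS}^{2}$, which is finite exactly when $C$ is Hilbert--Schmidt; hence in that case both $I-R$ and $I-R^{-1}$ are Hilbert--Schmidt. The only mildly delicate point in the whole argument is the index halving in the Schatten scale, i.e.\ the identity $s_{n}(C^{*}C)=s_{n}(C)^{2}$ used in the second paragraph; everything else is routine manipulation of the Neumann series and of the ideal properties listed before the statement.
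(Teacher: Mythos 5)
Your argument is correct and is exactly the proof the paper intends (the paper only states that the corollary follows ``in view of the above, Corollary~\ref{cor_Grunsky} and Theorem~\ref{thm_bounds}''): the singular-value dictionary $s_n(C^*C)=s_n(C)^2$ applied to $I-M=C^*C$, the identities $I-M^{-1}=-C^*CM^{-1}$ and $I-M=-M(I-M^{-1})$ together with the ideal properties recalled before the statement, and for the Hilbert--Schmidt part the summation of the row bounds \eqref{bound_R_row} using $\sum_j\varepsilon_j=\sum_j\|Ce_j\|^2=\|C\|_{HS}^2$. No gaps.
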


\begin{remark}
    Suetin made use in \cite[\S 1]{Suetin1974} of the so-called {\em method of normal moments}, where it is required that
    $$
         \sum_{j,k=0}^\infty | e_j^* C^*C e_k |^2 < \infty,
    $$
    or, in operator terms, $C^*C$ is of Hilbert-Schmidt class (or equivalently
    $C$ is of $4$-Schatten class). To ensure this, he derived in \cite[Lemma~1.5]{Suetin1974}
     and subsequent considerations an upper bound for $| e_j^* C^*C e_k |$ in terms of the smoothness properties of $\Gamma$. More precisely, he showed that there exists a constant $c(\Gamma)$ such that, for all $j,k\geq 0$,
    $$
        | e_j^* C^*C e_k | \leq \frac{c(\Gamma)}{(j+1)^{\beta/2}(k+1)^{\beta/2}},
    $$
    provided that $\Gamma\in \mathcal C(p+1,\alpha)$ with $p\geq 0$, and
    $\beta=2p+2\alpha>1$. For $j=k=n$ this leads to the estimate $\varepsilon_n=\| C e_n \|^2 = \mathcal O(1/n^\beta)$. In other words, Suetin only dealt with cases when $C$ belongs to the Hilbert-Schmidt class, or equivalently, when $I-M=C^*C$ is of trace class.

    It is not evident whether Suetin was aware of the
    classical theory of determinants of trace class perturbations of the identity as given, for example,
    in \cite[\S XXI.9]{Dunford}. Nevertheless, in \cite[Lemma~1.6 and~1.7]{Suetin1974} this theory is
    tacitly used
    in order to compute limits and upper bounds for various determinants, in particular, to
    show the existence of the following limits
    $$
      \lim_{n\to \infty} \prod_{j=0}^{n-1} R_{j,j}^2 = \lim_{n\to \infty} \det(R_n)^2 =\lim_{n\to \infty} \det(M_n) = \det(M)>0,
    $$
    and to obtain upper bounds for the numbers
    $$
         \pm R_{n,n}^{-1} R_{j,n}^{-1} = \det \left[\begin{array}{cc}
             I_{n+1} & (R_{n+1}^{-1})^* e_n \\
             e_j^* R_{n+1}^{-1} & 0
               \end{array}\right]
         = \frac{\det \left[\begin{array}{cc}
                  M_{n+1} & e_n \\ e_j^* & 0
            \end{array}\right]      }{\det(M_{n+1})} = - e_j^* M_{n+1}^{-1} e_n .
    $$
\qed
\end{remark}
A compact Grunsky matrix $C$ can be equivalently characterized by the fact that $\Gamma$ is asymptotically conformal \cite[Theorem~5.1]{Shen2009},
which by \cite[Theorem~11.1]{Pommerenke1992} 
is equivalent to the property $(|w|^2-1)^2 S_\psi(w)\to 0$ for $|w|\to 1+$.

We recall from above that, in any of these cases, $\varepsilon_n\to 0$, $n \to \infty$. The converse is however not true, as the following result shows.
\begin{proposition}\label{co2}
  The following statements hold.
  \begin{itemize}
  \item[(a)]  If $\Gamma$ has a corner (with angle different than $\pi$), then the corresponding Grunsky operator $C$ is not compact.
  \item[(b)]
  If the Grunky operator associated with $\Gamma$ is compact then $\Gamma$ is a quasiconformal curve.
  \item[(c)]
  $\Gamma$ is an analytic Jordan curve if and only if there exists $\rho\in [0,1)$ such $\varepsilon_n=\mathcal O(\rho^{2n})$.
   \item[(d)]
   There exists a quasiconformal curve $\Gamma$ such that, for infinitely many $n\in\mathbb{N}$,
   \begin{equation*}
   \varepsilon_n\ge \frac{\gamma^2}{(n+1)^{1-1/25}}.
   \end{equation*}
  \end{itemize}
\end{proposition}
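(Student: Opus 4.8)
\medskip

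\noindent\emph{Part (a).} The plan is to combine the Schwarzian identity \eqref{Sch} with a compactness argument. From \eqref{Sch}, \eqref{y(w)} and the Cauchy--Schwarz inequality one obtains $(|w|^2-1)^2|S_\psi(w)|\le 6\,\|Cy(w)\|/\|y(w)\|$; since the unit vectors $y(w)/\|y(w)\|$ converge weakly to $0$ in $\ell^2$ as $|w|\to 1+$ (each coordinate tends to $0$ while the norms stay $\equiv 1$), a compact $C$ would force $(|w|^2-1)^2S_\psi(w)\to 0$, i.e.\ $\Gamma$ asymptotically conformal (\cite[Theorem~11.1]{Pommerenke1992}). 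But near a boundary point $w_0$ lying over a corner of opening $\neq\pi$ one has $\psi(w)-\psi(w_0)=c\,(w-w_0)^{\mu}(1+o(1))$ with $\mu\neq 1$, and a short computation gives $S_\psi(w)=\frac{1-\mu^2}{2(w-w_0)^2}+O(|w-w_0|^{-1})$; taking $w=tw_0$, $t\downarrow 1$ (so $|w-w_0|=|w|-1$) yields $(|w|^2-1)^2|S_\psi(w)|\to 2|1-\mu^2|>0$, contradicting compactness of $C$.

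\medskip

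\noindent\emph{Part (b).} Here I would just invoke the facts recalled right before the statement: a compact $C$ makes $\Gamma$ asymptotically conformal \cite[Theorem~5.1]{Shen2009}, and every asymptotically conformal curve is a quasicircle \cite[\S 11.2]{Pommerenke1992}, hence quasiconformal.

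\medskip

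\noindent\emph{Part (c).} The forward direction follows from \eqref{summarize}, since an analytic Jordan curve belongs to $U(\rho_0)$ for some $\rho_0\in[0,1)$, whence $\varepsilon_n=\mathcal O(\rho^{2n})$ for any $\rho\in(\rho_0,1)$. For the converse, assuming $\varepsilon_n=\mathcal O(\rho^{2n})$, I would first note, via \eqref{norm_residual} and $C_{j,n}=C_{n,j}$, that $|C_{j,n}|\le\min(\sqrt{\varepsilon_j},\sqrt{\varepsilon_n})=\mathcal O(\rho^{(j+n)/2})$, hence $|b_{n,\ell}|=\mathcal O(\rho^{(n+\ell)/2})$ by \eqref{normalized_Grunsky_coefficients}. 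Then the double series in \eqref{Grunsky_coefficients} converges absolutely, uniformly on compacts of $\{|w|>\sqrt\rho\}\times\{|v|>\sqrt\rho\}$; exponentiating shows $(\psi(w)-\psi(v))/(\psi'(\infty)(w-v))$ extends to a zero-free analytic function there, which (fixing $v_0$ with $|v_0|>1$, and letting $v\to w$ for the derivative) gives a univalent analytic continuation of $\psi$ to $\{|w|>\sqrt\rho\}$. Thus $\Gamma\in U(\sqrt\rho)$, so $\Gamma=\psi(\partial\mathbb D)$ is an analytic Jordan curve.

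\medskip

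\noindent\emph{Part (d).} I would start from the entrywise bound $\varepsilon_n\ge|C_{0,n}|^2$ given by \eqref{norm_residual}, made explicit as follows: comparing the $v^{-1}$-terms in \eqref{Grunsky_coefficients} and using $F_1(z)=\phi_1 z+\phi_0=\gamma(z-\psi_0)$ gives $F_1(\psi(w))-w=\gamma\sum_{\ell\ge1}\psi_{-\ell}w^{-\ell}$, so $b_{1,\ell}=\gamma\psi_{-\ell}$ and $C_{0,n}=\sqrt{n+1}\,\gamma\,\psi_{-(n+1)}$ by \eqref{normalized_Grunsky_coefficients}; hence $\varepsilon_n\ge(n+1)\gamma^2|\psi_{-(n+1)}|^2$. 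It thus suffices to produce a quasiconformal curve whose conformal map satisfies $|\psi_{-k}|\ge c\,k^{-\alpha}$ for infinitely many $k$, with a fixed $\alpha\in(1/2,49/50)$: along those $k$, $\varepsilon_{k-1}\ge c^2\gamma^2 k^{1-2\alpha}$, and because $1-2\alpha>-24/25$ this exceeds $\gamma^2(k-1)^{-(1-1/25)}$ once $k$ is large. For the construction I would use a self-similar ``snowflake-type'' Jordan curve with a sufficiently mild generator, so that $\Gamma$ has bounded turning (hence is a quasicircle, i.e.\ quasiconformal) while its conformal map $\psi$ is only H\"older continuous with exponent $\alpha$ in the desired range, and the self-similarity forces $|\psi_{-k_j}|\asymp k_j^{-\alpha}$ along the geometric sequence of scales $k_j$ of the generator. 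The hard part will be this last estimate, namely the \emph{lower} bound $|\psi_{-k_j}|\gtrsim k_j^{-\alpha}$: one must rule out cancellations that would make the Laurent coefficients decay faster than the sharp H\"older exponent along the self-similar scales, which requires exploiting the exact self-similar functional equation satisfied by $\psi$ (equivalently, by the Faber polynomials through $\varepsilon_n=1-\|f_n\|_{L^2(G)}^2$). The concrete exponent $1-1/25$ has no special significance; it merely reflects the window of parameters in which such a curve can be simultaneously certified quasiconformal and forced to have slow coefficient decay.
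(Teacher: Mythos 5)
Parts (a) and (b) of your argument follow the same outline as the paper's (compactness of $C$ forces $\Gamma$ to be asymptotically conformal, which excludes corners and implies quasiconformality), and your converse in (c) is a genuinely different and arguably cleaner route: the paper only continues $\psi'$ analytically, using the $n=0$ row of $C$, and must then separately rule out cusps (via univalence of $\psi$) and corners (by appealing to part (a) and compactness) to upgrade ``analytic image of the circle'' to ``analytic Jordan curve''; you instead continue the full generating function \eqref{Grunsky_coefficients} to $\{|w|>\sqrt{\rho}\}\times\{|v|>\sqrt{\rho}\}$ and obtain both the continuation and the univalence of $\psi$ at once from the zero-free exponential. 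That part is correct. One caveat on (a): your expansion $S_\psi(w)=\frac{1-\mu^2}{2(w-w_0)^2}+O(|w-w_0|^{-1})$ requires asymptotics for $\psi',\psi'',\psi'''$ at the prevertex, which do not follow from $\psi(w)-\psi(w_0)=c\,(w-w_0)^{\mu}(1+o(1))$ alone; for a corner in the general sense (mere existence of two tangent half-lines) such derivative expansions are not available, and the paper's route --- asymptotically conformal curves admit no corners, by the purely geometric characterization in \cite[\S 11.2]{Pommerenke1992} --- is what covers the general case.

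The genuine gap is in part (d). Your reduction is exactly the paper's, via $\varepsilon_n\ge|C_{0,n}|^2=(n+1)\gamma^2|\psi_{-(n+1)}|^2$, but the existence of a quasiconformal curve whose map satisfies $|\psi_{-k}|\gtrsim k^{-\alpha}$ along a subsequence \emph{is} the content of the statement, and your snowflake sketch explicitly defers the decisive lower bound on the Laurent coefficients (``the hard part''), so nothing is actually proved. The paper does not construct such a curve either; it invokes Clunie's example \cite{Cl59} of a quasiconformal curve with $n|\psi_{-n}|>n^{1/50}$ for infinitely many $n$ (see also \cite{Ga99} and \cite[p.~63]{Nikos2013}). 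In particular, your closing remark that the exponent $1-1/25$ ``has no special significance'' is not right: it is exactly $1-2\cdot\tfrac{1}{50}$, inherited from Clunie's bound through $\varepsilon_n\ge\gamma^2(n+1)\bigl((n+1)^{1/50-1}\bigr)^2=\gamma^2(n+1)^{-(1-1/25)}$. To close the gap you should either cite this construction or actually carry out the coefficient lower bound for your self-similar curve, which is a substantial piece of work in its own right.
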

\begin{proof}
   Part (a) follows by the observation that asymptotically conformal does not allow corners by \cite[\S 11.2]{Pommerenke1992}.

   To show part (b), recall from above that if the Grunsky operator is compact then $\Gamma$ is asymptotically conformal. This latter property implies that $\Gamma$ is also quasiconformal, see \cite[\S 11.2]{Pommerenke1992}.

   The implication $\Longrightarrow$
   in part (c) is included in
   \eqref{summarize}. In order to show the reciprocal, suppose that $\varepsilon_n = \mathcal O(\rho^{2n})$ for some $\rho\in [0,1)$. Then, there exists a constant $c$ such that
   \begin{equation} \label{C0ell}
       |C_{\ell,0}|=|C_{0,\ell}| \leq \| Ce_\ell \| = \sqrt{\varepsilon_\ell} \leq c \, \rho^\ell
   \end{equation}
   for all $\ell\geq 0$. Hence, we get from \eqref{faber} and \eqref{faber_residual} for $n=0$ and
   $|w|>1$ that
   \begin{equation} \label{eq:derivative}
      \fe_0(w)=\psi'(w)\frac{\gamma}{\sqrt{\pi}} - \frac{1}{\sqrt{\pi}}
    = - \sum_{\ell=0}^\infty \sqrt{\frac{\ell+1}{\pi}}w^{-(\ell+2)} C_{\ell,0},
   \end{equation}
   and, in view of \eqref{C0ell}, we conclude that $\psi$ has an analytic continuation across the unit circle into $\mathbb D$. Thus, $\Gamma$ is an analytic image of the unit circle. Therefore, around any $w_0$ of modulus $1$ the map $\psi$ can be represented by a Taylor series expansion of the form
   $$
       \psi(w)=\psi(w_0) + a_1 (w-w_0)+a_2(w-w_0)^2+\cdots .
   $$
   If $a_1=\psi'(w_0)=0$, then necessarily $a_2\neq 0$ since $\psi$ is univalent in $\mathbb D^*$.
   This shows that $w_0$ would be mapped by $\psi$ onto an exterior pointing cusp on $\Gamma$.
   Our assumption on $\varepsilon_n$ also implies that $C$ is of Hilbert-Schmidt class and thus is compact. From part (a) we conclude that $\Gamma$ has no corners, and thus $\psi'(w)\neq 0$ for all $|w|=1$. This implies the univalence of $\psi$ in a neighborhood of $|w|=1$, leading to the conclusion that $\Gamma$ is an analytic Jordan curve.

   By comparing coefficients in the series expansion \eqref{eq:derivative} to that of $\psi(w)$ given in
   the introduction, we obtain the well-known relation, for $n\ge 0$,
   \begin{equation*}
   \sqrt{n+1}b_{n+1,0}=C_{n,0}=\gamma\sqrt{n+1}|\psi_{n+1}|;
   \end{equation*}
   cf. \cite[\S3.1]{Pommerenke1975}.
   This leads to the inequality\footnote{Compare with \cite[Theorem~2.2]{Nikos2013}.}
   \begin{equation}
       \varepsilon_n \geq |C_{0,n}|^2 = \gamma^2 (n+1) | \psi_{-n-1}|^2,
    \end{equation}
   for all $n\ge 0$. It suffices, therefore, for part (d) to recall the inequality
   $$
   n|\psi_{-n}|>n^{1/50},
   $$
   which holds, for infinitely many $n\in\mathbb{N}$,
   for the quasiconformal curve constructed by Clunie in \cite{Cl59},
   see also \cite[\S4.2]{Ga99} and \cite[p.~63]{Nikos2013}
\end{proof}

As an inference of Proposition~\ref{co2}(a), we remark that if $\Gamma$ is
piecewise analytic and has corners of angles different than $0$, $\pi$ or $2\pi$, then its Grunsky operator $C$ cannot be compact, though in this case $\varepsilon_n=\mathcal O(1/n)$ by \eqref{summarize}.

An assertion similar to Proposition~\ref{co2}(c) has been shown to hold in \cite[Theorem~2.4]{Nikos2014}, under the additional assumption, however, that $\Gamma$ has no zero interior angles.

With respect to Proposition~\ref{co2}(d),
we do not know whether the sole assumption of $\Gamma$ being $\kappa$-quasiconformal (or $\| C \| \leq \kappa<1$) is sufficient to imply that $\varepsilon_n \to 0$, $n \to \infty$.

\begin{figure}[t]
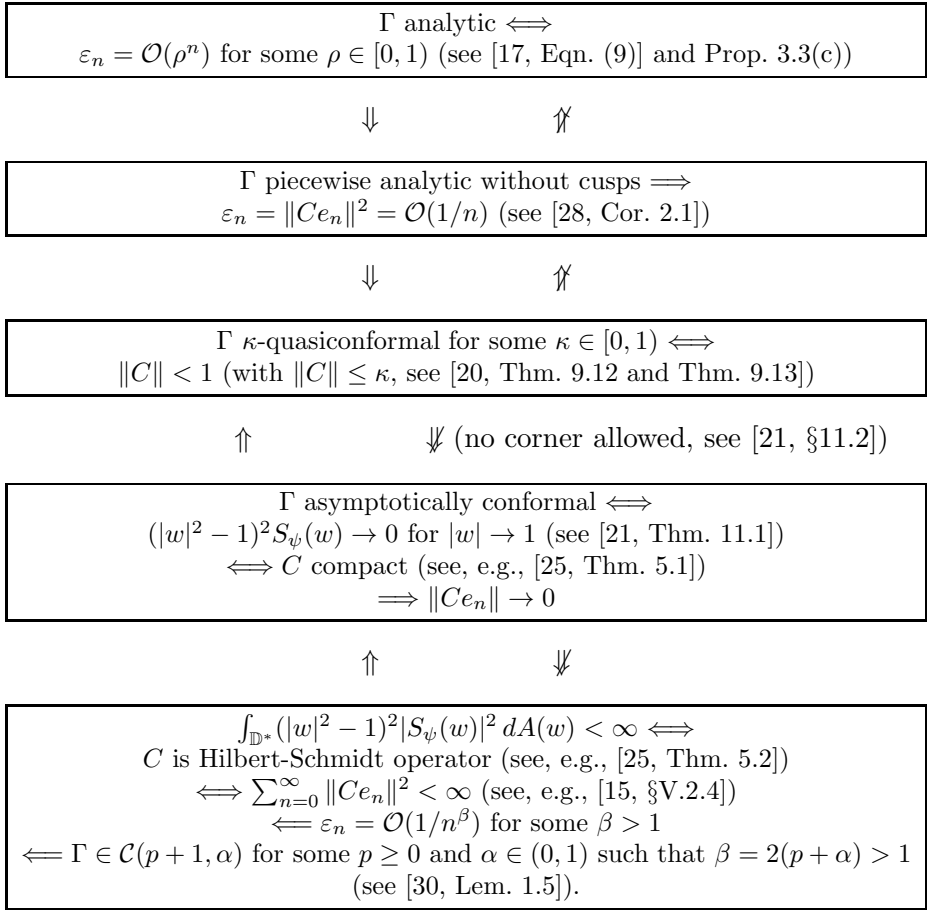

\begin{center}
   \fbox{\small\begin{minipage}[c]{12cm}\begin{center}
       $\Gamma$ analytic $\Longleftrightarrow$\\
       $\varepsilon_n = \mathcal O(\rho^n)$ for some $\rho\in [0,1)$ (see \cite[Eqn.\ (9)]{Kuehnau1986} and Prop.~\ref{co2}(c))
      \end{center}\end{minipage}}
   \\[10pt]
       $\Downarrow \qquad \qquad \qquad \not\Uparrow$
   \\[10pt]
   \fbox{\small\begin{minipage}[c]{12cm}\begin{center}
       $\Gamma$ piecewise analytic without cusps $\Longrightarrow$\\
       $\varepsilon_n = \| Ce_n\|^2 = \mathcal O(1/n)$ (see \cite[Cor.~2.1]{Nikos2013})
   \end{center}\end{minipage}}
   \\[10pt]
       $\Downarrow \qquad \qquad \qquad \not\Uparrow$
   \\[10pt]
   \fbox{\small\begin{minipage}[c]{12cm}\begin{center}
       $\Gamma$ $\kappa$-quasiconformal for some $\kappa\in [0,1)$ $\Longleftrightarrow$\\
       $\| C \|<1$ (with $\| C \| \leq \kappa$, see \cite[Thm.~9.12 and Thm.~9.13]{Pommerenke1975})
   \end{center}\end{minipage}}
   \\[10pt]
       $~ \qquad \qquad \qquad  \Uparrow \qquad \qquad \qquad \not\Downarrow$ (no corner allowed, see \cite[\S 11.2]{Pommerenke1992})
   \\[10pt]
   \fbox{\small\begin{minipage}[c]{12cm}\begin{center}
       $\Gamma$  asymptotically conformal $\Longleftrightarrow$\\
       $(|w|^2-1)^2 S_\psi(w)\to 0$ for $|w|\to 1$  (see \cite[Thm.~11.1]{Pommerenke1992}) \\
       $\Longleftrightarrow C$ compact (see, e.g., \cite[Thm.~5.1]{Shen2009})\\
       $\Longrightarrow \|C e_n\|\to 0$
   \end{center}\end{minipage}}
   \\[10pt]
       $\Uparrow \qquad \qquad \qquad \not\Downarrow$
   \\[10pt]
   \fbox{\small\begin{minipage}[c]{12cm}\begin{center}
       $\int_{\mathbb D^*} (|w|^2-1)^2 |S_\psi(w)|^2 \, dA(w) < \infty$
       $\Longleftrightarrow$ \\
       $C$ is Hilbert-Schmidt operator (see, e.g., \cite[Thm.~5.2]{Shen2009})\\
       $\Longleftrightarrow \sum_{n=0}^\infty \|C e_n\|^2 <\infty$
       (see, e.g., \cite[\S V.2.4]{Kato1980})\\
       $\Longleftarrow \varepsilon_n=\mathcal O(1/n^\beta)$ for some $\beta>1$
       \\
       $\Longleftarrow \Gamma\in \mathcal C(p+1,\alpha)$ for some $p\geq 0$ and $\alpha\in(0,1)$ such that $\beta=2(p+\alpha)>1$
       (see \cite[Lem.~1.5]{Suetin1974}).
   \end{center}\end{minipage}}
\end{center}
\caption{Regularity of the boundary $\Gamma$, regularity of the Schwarzian derivative $S_\psi$ defined in \eqref{Sch}, and properties of the Grunsky operator.}\label{FigGrunsky}
\end{figure}

In Figure~\ref{FigGrunsky} we summarize links between smoothness of $\Gamma$, boundedness of $S_\psi$ and spectral properties of $C$, with pointers to the literature.

It is not difficult to see that the Grunsky matrix does not depend on a linear transformation of the plane. We may therefore assume, without loss of generality, that $0\in G$, and that the inner conformal map $\varphi:\mathbb D\mapsto G$ satisfies $\varphi(0)=0$ and $\varphi'(0)=1$ (in fact the outer conformal map $\widetilde \psi:\mathbb D^*\mapsto 1/G$ is given by $\widetilde \psi(w)=1/\varphi(1/w)$ with $\mbox{cap}(1/\Gamma)=1$). We  note that, if $\Gamma$ is quasiconformal, then so is $1/\Gamma$ by \cite[Lemma~9.8]{Pommerenke1975}.

Several authors (see, e.g., \cite[\S 2.2.2, pp.~70--73]{Tao})) consider an extension of $C=C(\Gamma)$ as a linear operator acting on $\ell^2(\mathbb Z)$, namely,
$$
   A = \left[\begin{array}{cc} \widetilde C & B \\
                            B^T & C \end{array}\right],
$$
which is complex symmetric. Here $\widetilde C$ represents (up to permutation of rows and columns)  the Grunsky operator $C(1/\Gamma)$, and a generating function of $B$ in terms of the inner and outer conformal maps $\varphi$ and $\psi$ is given in \cite[\S 2.2.2, p.~73]{Tao}. A generalized Grunsky inequality shows that $A$ is unitary, provided that $G$ has positive planar Lebesgue measure and $\Gamma$ has zero planar Lebesgue measure, which is true in our setting of a quasiconformal curve $\Gamma$. In particular, it is easily seen that $M=I-C^*C= B^* B$. Jones in \cite[Theorem~1.3]{Jones1999} gives another criterion for $C(1/\Gamma)$ to be in Schatten class of index $p$ in terms of the inner conformal map $\varphi:\mathbb D\to G$. For $p=2$,
this criterion is equivalent to requiring that $\int_{\mathbb D} |\log \varphi'(w)|^2 dA(w) <\infty$.

\section{Proofs of the main theorems}

\begin{proof}[Proof of Theorem~\ref{thm1}]
    Let $K$ be a compact subset of $\Omega$, and thus $r:=\min \{ |\phi(z)|: z \in K\} > 1$.
    Using \eqref{faber_residual} and \eqref{basis2} we have for $z=\psi(w)\in K$ that
  \begin{align*}
    \sqrt{\frac{\pi}{n+1}} \frac{p_n(z)-\fa_n(z)}{\phi'(z)\phi(z)^n}
   & =
    \sqrt{\frac{\pi}{n+1}} \frac{\psi'(w)}{w^n} \Bigl( \fa_0(z),...,\fa_n(z)\Bigr) \Bigl( R_{n+1}^{-1} - I_{n+1} \Bigr) e_n
    \\&
    = \sqrt{\frac{\pi}{n+1}} \frac{1}{w^n} \Bigl( \fe_0(w),...,\fe_\ell(w),...\Bigr)
    \Bigl( R^{-1} - I \Bigr) e_n \\
    &\quad+ \sqrt{\frac{\pi}{n+1}}\sum_{j=0}^n \sqrt{\frac{j+1}{\pi}}\frac{w^j}{w^n} \Bigl(R_{j,n}^{-1} - \delta_{j,n}\Bigr).
  \end{align*}

    The right-hand side will be divided up into three terms, two of them being bounded above uniformly for $z\in K$ by $\mathcal O(\sqrt{\varepsilon_n}/r^{n/2})$, and the last one by $\mathcal O({\sqrt{\varepsilon_n}}/{n^{\beta/2}})$, as this will be sufficient for the claim of Theorem~\ref{thm1}. Observe that, by \eqref{faber_residual} and the use of the Cauchy-Schwarz inequality we have
    \begin{align*}
        \Bigl| \sqrt{\frac{\pi}{n+1}}& \frac{1}{w^n} \Bigl( \fe_0(w),...,\fe_\ell(w),...\Bigr)
       \Bigl( R^{-1} - I \Bigr) e_n \Bigr|
       \\&=
        \sqrt{\frac{\pi}{n+1}}\frac{1}{r^n}  \, \Bigl| \Bigl( \sqrt{\frac{1}{\pi}} w^{-2} ,...,\sqrt{\frac{\ell+1}{\pi}}w^{-\ell-2},...\Bigr)
        C R^{-1}
       \Bigl( I - R \Bigr) e_n \Bigr|
       \\&\leq
        \sqrt{\frac{\pi}{n+1}}\frac{1}{r^n}  \, \Bigl\| \Bigl( \sqrt{\frac{1}{\pi}} w^{-2} ,...,\sqrt{\frac{\ell+1}{\pi}}w^{-\ell-2},...\Bigr)
        \Bigr\| \, \|C \| \,\|R^{-1} \| \,
       \Bigl\| (I-R)e_n \Bigr\|
       \\&\leq
        \sqrt{\frac{1}{n+1}}\frac{1}{r^n}  \,  \frac{\|C \| \,
        \|R^{-1} \|}{|w|^2-1} \sqrt{\frac{\varepsilon_n \, \| C \|^2 }{1-\| C \|^2}} = \mathcal O(\frac{\sqrt{\varepsilon_n/n}}{r^{n}}),
    \end{align*}
    where in the last inequality we have applied \eqref{bound_R_row} and \eqref{y(w)}.

    Let $m$ be the integer part of $(n+1)/2$. Then, again by the Cauchy-Schwarz inequality and \eqref{bound_R_row}, we have
    \begin{align*}
        \Bigl| \sqrt{\frac{\pi}{n+1}}\sum_{j=0}^{m-1} \sqrt{\frac{j+1}{\pi}}\frac{w^j}{w^n} \Bigl(R_{j,n}^{-1} - \delta_{j,n}\Bigr) \Bigr|
        &= \Bigl| w^{-n} ( \sqrt{\frac{1}{n+1}}w^0,..., \sqrt{\frac{m+1}{n+1}}w^m,0,0,...) (R^{-1} - I)e_n \Bigr|        \\ &\leq  r^{-n} \, \| ( r^0,..., r^m,0,0,...)\| \, \| R^{-1} \|
        \, \| (I-R)e_n \|
        \\&\leq  \frac{r^{m-n}}{\sqrt{r^2-1}} \, \| R^{-1} \|
        \, \sqrt{\frac{\varepsilon_n \, \| C \|^2 }{1-\| C \|^2}}
        = \mathcal O(\frac{\sqrt{\varepsilon_n}}{r^{n/2}}).
    \end{align*}

    Now, using \eqref{bound_R_entry} we obtain for the remaining term that
    \begin{eqnarray*}
        \Bigl| \sqrt{\frac{\pi}{n+1}}\sum_{j=m}^{n} \sqrt{\frac{j+1}{\pi}}\frac{w^j}{w^n} \Bigl(R_{j,n}^{-1} - \delta_{j,n}\Bigr) \Bigr|
        \leq  \sqrt{\frac{\varepsilon_n}{n+1}} \frac{1}{1-\| C \|^2}\sum_{j=m}^{n} \sqrt{(j+1)\varepsilon_j} \, r^{j-n}.
    \end{eqnarray*}
    By our assumption on $\varepsilon_n$ there exists some constant $c$ such that
    $\varepsilon_n\leq c/(n+1)^\beta$ and thus, uniformly for $m\leq j \leq n$,
    $$
           \varepsilon_j \leq \frac{c}{(j+1)^\beta} \leq \frac{c}{(m+1)^\beta} \leq \frac{2^\beta c}{(n+1)^\beta}.
    $$
    Hence,
    \begin{align*}
       \sqrt{\frac{\varepsilon_n}{n+1}} \frac{1}{1-\| C \|^2}\sum_{j=m}^{n} \sqrt{(j+1)\varepsilon_j|} \, r^{j-n}
        &\leq \sqrt{\frac{\varepsilon_n}{(n+1)^\beta}} \, \frac{\sqrt{2^\beta c}}{1-\| C \|^2} \sum_{j=m}^{n} r^{j-n}
       \\&= \mathcal O(\frac{\sqrt{\varepsilon_n}}{n^{\beta/2}}),
    \end{align*}
    as claimed above.

    Finally, by \eqref{Faber_outside_bis} and \eqref{Faber_outside} we obtain
    $$
        \sqrt{\frac{\pi}{n+1}} \frac{\fa_n(z)}{\phi'(z)\phi(z)^n}
        - 1 = \mathcal O(\frac{\sqrt{\varepsilon_n/n}}{r^n})
    $$
    uniformly for $z\in K$. Since $r^{-n}=\mathcal O(n^{-\beta})$ for all $\beta>0$, the required relation \eqref{eqn.thm1} follows.
\end{proof}

\def\fH{G}
\begin{proof}[Proof of Theorem~\ref{thm2}]
   We start by deriving a well-known formula for multiplication with $z$ for our reference polynomials. Since $\fa_n$ is of degree $n$, there exist $\fH_{j,n}\in \mathbb C$, $\fH_{j,n}=0$, for $j>n+1$, such that
   \begin{equation} \label{zFaber}
      z \fa_n(z) = \sum_{j=0}^{n+1} \fa_j(z) \fH_{j,n} .
   \end{equation}

   It turns out that explicit formulas can be given for the entries $\fH_{j,n}$ in terms of the conformal map $\psi$. To see this, we use the generating function
   \cite[Section 3.1, Eqn.(4)]{Pommerenke1975}
   \begin{equation} \label{generating_Faber}
        \frac{1}{\psi(u)-z}
        = \sum_{n=0}^\infty \frac{F'_{n+1}(z)}{(n+1) u^{n+1}}
        = \sum_{n=0}^\infty \sqrt{\frac{\pi}{n+1}}\frac{\fa_{n}(z)}{u^{n+1}}.
   \end{equation}
   Then, by multiplying by $\psi(u)-z$ and comparing like powers of $u$ we conclude that, for $j \leq n+1$,
   $$
        \fH_{j,n} = \sqrt{\frac{n+1}{j+1}} \psi_{j-n} .
   $$

    For relating $H$ with $\fH$, we denote by $\underline{H}_n$, $\underline{I}_n$, and $\underline{\fH}_n$ the submatrices of $H$, $I$, and $\fH$, respectively, formed with the first $n+1$ rows and the first $n$ columns.
    Then, we rewrite \eqref{multiplication} as
    $$
         \Bigl( p_0(z),...,p_{n}(z) \Bigr) (z \underline{I}_n - \underline{H}_n) = (0,...,0),
    $$
    This, in view of \eqref{basis} and \eqref{basis2}, leads to the relations
    \begin{align*}
        (0,...,0) &= \Bigl( p_0(z),...,p_{n}(z) \Bigr) (z \underline{I}_n - \underline{H}_n) R_{n}
        \\&=
        \Bigl( \fa_0(z),...,\fa_{n}(z) \Bigr) R_{n+1}^{-1} (z \underline{I}_n - \underline{H}_n) R_{n}
        \\&=
        \Bigl( \fa_0(z),...,\fa_{n}(z) \Bigr) (z \underline{I}_n - R_{n+1}^{-1} \underline{H}_n R_{n} ) .
    \end{align*}

    By the uniqueness of the coefficients in \eqref{zFaber} we conclude that
    $R_{n+1}\underline{\fH}_n = \underline{H}_n R_{n}$ for all $n$.
    Since below $\underline{\fH}_n$ in $\fH$ and below $R_n$ in $R$
    all entries are equal to zero, we conclude that
    \begin{equation} \label{link_shift}
            R \fH = H R ,
    \end{equation}
    which together with Corollary~\ref{cor_Grunsky} and the boundedness of $H$ (see the discussion following Eqn. \eqref{multiplication}) implies that $\fH$ represents a bounded operator.
    Furthermore, for all $-1 \leq k \leq n$ and $n \geq 0$,
    \begin{align*}
        H_{n-k,n} - \fH_{n-k,n} &= e_{n-k}^* \Bigl( H - \fH \Bigr) e_n
        \\&=
        e_{n-k}^* \Bigl( H(I-R) - (I-R)\fH \Bigr) e_n
        \\&=
        \sum_{\ell=n-k-1}^n  H_{n-k,\ell} (I-R)_{\ell,n}
        - \sum_{\ell=n-k}^{n+1}  (I-R)_{n-k,\ell} \fH_{\ell,n}.
    \end{align*}

    Using \eqref{bound_R_entry}, we get the upper bound
    \begin{eqnarray*} &&
       | H_{n-k,n} - \fH_{n-k,n} | \leq \frac{1}{1-\| C \|^2} \Bigl(
       \| H \| \, \sum_{\ell=n-k-1}^n  \sqrt{\varepsilon_\ell\varepsilon_{n}}
        + \| \fH\| \, \sum_{\ell=n-k}^{n+1} \sqrt{\varepsilon_\ell\varepsilon_{n-k} } \Bigr),
    \end{eqnarray*}
    and the claim in Theorem~\ref{thm2} follows.
\end{proof}

\section{Numerical results}
The purpose of this section is to present numerical results that provide experimental support
to the conjecture that the rate $\mathcal{O}(1/n)$ in Theorem~\ref{thm1} is best possible for domains with piecewise analytic boundaries having corners.

To do this we test the conjecture numerically by constructing a finite sequence of Bergman
polynomials associated with a very simple geometry. More precisely, we choose $G$ to be defined
by the two intersecting circles $|z-1|=\sqrt{2}$ and $|z+1|=\sqrt{2}$,
which meet orthogonally at the points $i$ and $-i$,

It is trivial to check that, in this case, the associated conformal map
$\phi: \Omega\to\mathbb{D}^*$ is given by
\begin{equation}\label{eq:Phi-CC}
\phi(z)=\frac{1}{2}\left(z-\frac{1}{z} \right).
\end{equation}

Let $A_n(z)$ denote the error in the approximation \eqref{eqn.thm1}, i.e., let
\begin{equation*}
A_n(z):=\sqrt{\frac{\pi}{n+1}} \frac{p_n(z)}{\phi'(z)\phi(z)^n} - 1
\end{equation*}
Then, in Tables~\ref{tab1} and \ref{tab2} we report the computed values of $A_n(3)$ and $A_n(2i)$,
which we believe to be correct to all figures quoted, for $n=100,\dots,120$, in two columns of even and odd values of $n$.
We also report the values of the parameter $s$, which is designed to test the hypothesis
\begin{equation} \label{hypothesis}
|A_n(z)|\asymp \frac{c}{n^s},\quad z\in\Omega,
\end{equation}
for some positive constants $c$.
This was done by estimating $s$ by means of the formula
$$
s_n:=\log\left(\frac{|A_n(z)|}{|A_{n+2}(z)|}\right)\big/\log\left(\frac{n+2}{n}\right).
$$

All the computations presented here were carried out in Maple 16 with 128 significant figures on a MacBook Pro.
The construction of the Bergman polynomials was made by using the Arnoldi Gram-Schmidt algorithm;
see \cite[Section 7.4]{Nikos2013} for a discussion regarding the
stability of the algorithm.

\begin{table}[t]
\begin{center}
\begin{tabular}{ccc|ccc}
\hline $ $
$n$ & $|A_n(3)|$  & $s_n$ &   $n$  & $|A_n(3)|$ &$s_n$  ${\vphantom{\sum^{\sum^{\sum^N}}}}$ \\*[3pt] \hline
100 & 8.120e-5 & 1.02301& 101  & 7.210e-5  & 0.9537 \\
102 & 7.958e-5 & 1.02284& 103  & 7.077e-5  & 0.9543\\
104 & 7.801e-5 & 1.02266&105  &  6.948e-5  & 0.9549\\
106 & 7.651e-5 & 1.02249&107  &  6.824e-5  & 0.9555\\
108 & 7.506e-5 & 1.02233&109  &  6.704e-5  & 0.9561\\
110 & 7.366e-5 & 1.02216& 111  & 6.589e-5  & 0.9567 \\
112 & 7.232e-5 & 1.02200& 113  & 6.477e-5  & 0.9572 \\
114 & 7.102e-5 & 1.02184& 115  & 6.369e-5  & 0.9577 \\
116 & 6.977e-5 & 1.02169& 117  & 6.265e-5  & 0.9582\\
118 & 6.856e-5 & 1.02154& 119  & 6.164e-5  & --- \\
120 & 6.739e-5 & --- &   &  &  \\ \hline
\end{tabular}
\end{center}
\medskip
\caption{Computed values for $|A_n(3)|$ and $s_n$.}
\label{tab1}
\end{table}

\begin{table}[h]
\begin{center}
\begin{tabular}{ccc|ccc}
\hline $ $
$n$ & $|A_n(2i)|$  & $s_n$ &   $n$  & $|A_n(2i)|$ &$s_n$  ${\vphantom{\sum^{\sum^{\sum^N}}}}$ \\*[3pt] \hline
100 & 1.323e-3 & 1.02551& 101  & 1.171e-3  & 0.9513 \\
102 & 1.297e-3 & 1.02526& 103  & 1.150e-3  & 0.9521\\
104 & 1.271e-3 & 1.02503&105   & 1.129e-3  & 0.9528\\
106 & 1.246e-3 & 1.02480&107   & 1.109e-3  & 0.9534\\
108 & 1.223e-3 & 1.02457&109   & 1.089e-3  & 0.9541\\
110 & 1.200e-3 & 1.02435& 111  & 1.071e-3  & 0.9547 \\
112 & 1.178e-3 & 1.02413& 113  & 1.052e-3  & 0.9553 \\
114 & 1.157e-3 & 1.02392& 115  & 1.035e-3  & 0.9559 \\
116 & 1.136e-3 & 1.02372& 117  & 1.018e-3  & 0.9565 \\
118 & 1.117e-3 & 1.02351& 119  & 1.002e-3  & --- \\
120 & 1.098e-3 & --- &   &  &  \\ \hline
\end{tabular}
\end{center}
\medskip
\caption{Computed values for $|A_n(2i)|$ and $s_n$.}
\label{tab2}
\end{table}

It is interesting to note the following regarding the presented results:
\begin{itemize}
\item
For even $n$ the values of $s_n$  decay monotonically to $1$.
\item
For odd $n$ the values of $s_n$  increase monotonically $1$.
\item
For both even and odd $n$ the values of $|A_n(3)|$ and $|A_n(2i)|$ decay monotonically to $0$.
\end{itemize}

The values of the parameter $s_n$ in the two tables, and also in some other experiments
not presented here, indicate clearly that
\eqref{hypothesis} holds with $s=1$.

\bigskip

\noindent
{\bf Acknowledgements.} Parts of this work have been carried out during a visit of B.B.\ at the University of Cyprus in October 2014 and of N.S.\ at the University of Lille in July 2015. Both authors are grateful to the hosting institutions for their support.

\end{document}